\def\ps@pprintTitle{%
 \let\@oddhead\@empty
 \let\@evenhead\@empty
 \def\@oddfoot{\centerline{\thepage}}%
 \let\@evenfoot\@oddfoot}
\newtheorem{theorem}{Theorem}
\newtheorem{lem}[theorem]{Lemma}
\newtheorem{pro}[theorem]{Proposition}
\newtheorem{rmk}[theorem]{Remark}
\newtheorem{definition}[theorem]{Definition}
\newcommand{\bi}{\begin{itemize}}
\newcommand{\ei}{\end{itemize}}
\newcommand{\ben}{\begin{enumerate}}
\newcommand{\een}{\end{enumerate}}
\newcommand{\be}{\begin{equation}}
\newcommand{\ee}{\end{equation}}
\newcommand{\ba}{\begin{aligned}}
\newcommand{\ea}{\end{aligned}}
\newcommand{\bea}{\begin{eqnarray}}
\newcommand{\eea}{\end{eqnarray}}
\newcommand{\tbox}[1]{{\mbox{\tiny #1}}}
\newcommand{\mbf}[1]{{\bm #1}}           
\newcommand{\bigO}{{\mathcal O}}
\newcommand{\ZZ}{\mathbb{Z}}
\newcommand{\RR}{\mathbb{R}}
\newcommand{\bsigma}{\bm{\sigma}}
\newcommand{\bxi}{\bm{\xi}}
\newcommand\bx{\mathbf{x}}
\newcommand\cs{\mathcal{S}}
\newcommand\cU{\mathcal{U}}         
\newcommand\ccU{\overline{\cU}}   
\newcommand\pcU{\partial\cU}       
\newcommand\cK{\mathcal{K}}
\newcommand{\defeq}{:=}
\def\x{{\bf x}}
\def\X{{\bf X}}
\def\bx{{\bf x}}
\def\y{{\bf y}}
\def\c{{\bf c}}
\def\d{{\bf d}}
\def\s{{\bf s}}
\def\t{{\bf t}}
\def\c{{\bf c}}
\def\u{{\bf u}}
\def\v{{\bf v}}
\def\f{{\bf f}}
\def\w{{\bf w}}
\def\e{{\bf e}}
\def\g{{\bf g}}
\def\r{{\bf r}}
\def\T{{\bf T}}
\def\F{{\bf F}}
\def\bR{{\bf R}}
\def\n{{\bf n}}
\def\beq{\begin{equation}}
\def\eeq{\end{equation}}
\newcommand{\nx}{\n^\x}                       
\newcommand{\tor}{{\mathbb{T}^2}}              
\newcommand{\bc}{\begin{center}}
\newcommand{\ec}{\end{center}}
\newcommand{\bfi}{\begin{figure}}
\newcommand{\efi}{\end{figure}}
\newcommand{\ca}[2]{\caption{#1 \label{#2}}}
\newcommand{\ig}[2]{\includegraphics[#1]{#2}}
\def\FI{Flatiron Institute, Simons Foundation, New York, NY 10010}
\def\UNC{The University of North Carolina at Chapel Hill, Chapel Hill, NC, 27599}
\def\papertitle{An integral equation method for the simulation of doubly-periodic suspensions of rigid bodies in a shearing viscous flow}
\begin{document}

\begin{frontmatter}

\title{\papertitle}


\author{Jun Wang\fnref{fi}}
\address[fi]{\FI}
\ead{jwang@flatironinstitute.org}
 
\author{Ehssan Nazockdast\fnref{unc}}
\address[unc]{\UNC}
\ead{ehssan@email.unc.edu}

\author{Alex Barnett\fnref{fi}}
\ead{abarnett@flatironinstitute.org}


\begin{abstract}
  With rheology applications in mind,
  we present a fast solver for the
  time-dependent effective viscosity of
  an infinite lattice containing one or more neutrally buoyant smooth rigid particles per unit cell, in a two-dimensional Stokes fluid with given shear rate.
  At each time,
  the mobility problem 
  is reformulated as a
  2nd-kind boundary integral equation, 
  then discretized to spectral
  accuracy by the Nystr\"om method and solved iteratively, giving typically
  10 digits of accuracy.
  Its solution controls the
  evolution of particle locations and angles in a first-order
  system of ordinary differential equations.
  The formulation is placed on a rigorous footing by defining
  a generalized periodic Green's function for the skew lattice.
  Numerically, the periodized integral operator is split into a near image
  sum---applied in linear time via the fast multipole method---%
  plus a correction field solved cheaply via proxy Stokeslets.
  We use barycentric quadratures to evaluate
  particle interactions and velocity fields accurately,
  even at distances much closer than the node spacing.
  Using first-order time-stepping we simulate, eg, 25 ellipses per unit cell
  to 3-digit accuracy on a desktop in 1 hour per shear time.
  Our examples show equilibration at long times, force chains,
  and two types of blow-ups (jamming) whose power laws match
  lubrication theory asymptotics.
\end{abstract}

\begin{keyword}
  Stokes flow \sep
  boundary integral equation \sep 
  periodic \sep
  rheology \sep
  generalized Greens function \sep
  quadrature
\end{keyword}

\end{frontmatter}

\section{Introduction}
Flowing suspensions are ubiquitous in nature and industry, and have been 
used as model systems for theoretical studies of soft materials. 
They exhibit complex nonlinear rheological behavior, including shear-thinning, 
shear-thickening and existence of normal stress differences \cite{guazzelli2011physical}. 
 Previous theoretical \cite{brady1997microstructure, nazockdast2012microstructural}, 
computational \cite{sierou2001accelerated, wang2016spectral} and experimental studies 
\cite{gadala1980shear, cheng2011imaging, xu2014microstructure, gurnon2015microstructure}  
have established that particles near contact is the most likely configuration of particles in shear flows, 
and that near-contact hydrodynamic (lubrication) forces/stresses are key to 
determining the nonlinear rheology of suspensions, 
including their shear thickening behavior 
\cite{bender1998reversible, maranzano2001effects, jamali2019alternative}.
Accurate modeling of flow near such particles is also crucial to
understanding transport phenomena such as super-diffusion
\cite{souzy15}.

Although previous
studies have largely focused on spheres,
many applications involve 
suspensions of non-spherical particles \cite[Ch.~5]{mewis2012colloidal}.
Moreover, experimental studies show that 
the rheology of concentrated suspensions, most notably their shear thickening behavior, 
are highly sensitive to their shape \cite{egres2005rheology, cwalina2016rheology}. 
For example, experiments on suspensions of cubic particles show that 
the viscosity diverges at volume fractions slightly below that of spherical particles, yet 
the divergence of viscosity and normal stress differences with volume fraction is stronger than those reported for spherical 
particles \cite{cwalina2016rheology}.

Currently, there is no numerical method that can accurately simulate lubrication forces and stresses in 
particulate suspensions of complex shape in the
Stokes (zero Reynolds number) and non-Brownian (infinite Peclet number)
regime.
The method that comes closest for \emph{spherical} suspensions is
 \emph{Stokesian Dynamics} (SD) \cite{durlofsky1987dynamic, brady1988stokesian, fossbrady,wang2016spectral}. 
 In that method, hydrodynamic interactions (HI)
 are divided into far-field and near-field.
The far-field mobility tensor is constructed through a multipole expansion truncated at the stresslet level;
the near-field resistance tensor is assumed to be pair-wise additive, and is computed by asymptotic lubrication solutions between two spheres. Despite its huge success, SD cannot be readily used to simulate non-spherical particles, especially their near-field HIs.

In this work, by contrast, we present a convergent numerical
method to solve the underlying Stokes mobility boundary value problem (BVP)
for arbitrary smooth particle shapes.
We use potential theory to reformulate this BVP
as a boundary integral equation (BIE)
for an unknown Stokeslet ``density'' function.
This BIE approach, which requires the suspending
fluid to be Newtonian,
has several advantages over conventional 
discretization (eg, finite elements):
i) since only the boundaries must be discretized
there is a large reduction in $N$, the number of unknowns;
ii) meshing becomes much simpler;
iii) unlike with volume discretization, well-conditioned formulations are possible;
and iv) close-touching surfaces may be handled without introducing a high density of small volume elements (as in \cite{haan98}).
A possible disadvantage is the loss of sparsity in the resulting linear
system; however, algorithms such as the fast multipole method (FMM)
\cite{lapFMM}
are available to apply the dense system matrix in $\bigO(N)$ time,
leading to optimal-complexity schemes.

Our BIE formulation is a doubly-periodic version of
the 2nd-kind formulation of
Karrila--Kim \cite{karrilakim}; also see Rachh--Greengard \cite{rachhgreengard}.
The motivation to address the periodic case is two-fold:
there is interest in
i) the rheology of {\em regular} arrays of rigid
particles, with prior numerical \cite{nunan84} and asymptotic
analysis work on discs in 2D and spheres in 3D \cite{berlyand09};
and
ii) the behavior of {\em random} suspensions
via numerical simulation of a
finite-sized representative volume element (RVE),
where periodic boundary conditions most closely model an infinite system
\cite{haan98,fossbrady}.
In both cases this is a {\em homogenization} problem, and the
(time-dependent) effective viscosity is sought.
Numerically, this demands fast application of a {\em periodized}
integral operator in an arbitrary skewing lattice,
for which we simplify and extend the recent
method of the last author, Marple, Veerapaneni, and Zhao \cite{ahb}.
This method (an extension of ideas of 
Larson--Higdon \cite{larsonhigdon2})
splits the periodized kernel into
a free-space Stokes FMM sum over nearby images, plus a correction
flow that solves an ``empty'' BVP with smooth data that is solved cheaply
using particular solutions.
This applies physical periodicity conditions,
avoids ad-hoc non-convergent lattice sums \cite{Krop04},
and, unlike particle-mesh Ewald (PME) methods which rely on FFTs
\cite{klint}, is spatially adaptive.

This work includes three other innovations.
1) We extend barycentric quadratures for spectrally accurate
near-boundary evaluation of the single-layer potential
due to the last author, Wu and Veerapaneni \cite{bwv},
to include the traction needed in our formulation.
This allows a spatial solve accurate to distances
as small as $d = \bigO(h^2)$, where $h$ is the on-surface node spacing
(see Figures~\ref{f:geom_stat_conv}--\ref{f:stat_conv}).
2) We present an efficient way to extract the effective viscosity
from the simulation via a line integral (see \eqref{mueff} and
section~\ref{s:mueff}).
3) We place the entire scheme on a rigorous footing,
proving that the periodized BIE is equivalent to the periodic mobility
BVP, by introducing a generalized periodic Stokes Green's function
(section~\ref{subsec:gper}).
As a byproduct we prove existence and uniqueness for the BVP itself
(Lemma~\ref{l:bvpunique} and Theorem~\ref{t:bvp}).

\begin{rmk}\label{r:contact}  
 We study the case of pure HIs, which is
  an accurate physical model for smooth particles not in contact.
Previous simulations of spheres in Stokes flow show that HIs alone 
are not sufficient for preventing particles from making contract and jamming at finite strains \cite{melrose1995pathological}.
For numerical tractability, non-hydrodynamic forces can be added, 
either through short-range repulsive forces \cite{sierou2001accelerated} 
or collision-free constraint methods \cite{lu2017contact, yan2019scalable},
when particles get closer than a cutoff distance.  
In experimental settings these near-contact interactions can, for example, be induced by 
the particles' surface roughness. In both experiments and simulations the presence of 
these non-hydrodynamic near-contact interactions 
break the fore-aft symmetry of linear Stokes equation leading to anisotropy in  
microstructure and nonlinear rheology \cite{gadala1980shear, brady1997microstructure}. 
In this study we do not include any non-hydrodynamic forces, yet, due to our
high accuracy and small time-steps we are able to runs simulations
for large times without particle collisions
when the volume fraction is moderate.
\end{rmk}   

The paper is organized as follows. In section~\ref{s:setup}, we set up the
problem geometry.
In section~\ref{s:bvp}, we describe the quasi-static BVP and review its mathematical properties.
In section~\ref{s:green}, we define a generalized periodic Stokes Green's function, prove its uniqueness
and existence in a constructive manner, and use it to generalize classical potential theory to our periodic case.
In section~\ref{s:bie}, we reformulate the BVP as a boundary integral equation
whose kernel is the periodic Green's function.
In section~\ref{sec:numerical}, we give a complete numerical algorithm for the quasi-static BVP, followed by a short discussion
of time stepping methods in section~\ref{s:step}. Numerical examples for both the quasi-static and time dependent problems
are given in section~\ref{s:num}.
We finish with conclusions in section~\ref{s:conc}.
Two appendices contain technical proofs needed for existence of the periodic
Green's function.

\bfi  
\centering\ig{width=3in}{mobprob.eps}
\ca{Illustration of the periodic effective viscosity problem.
  A given shear rate $\gamma$ is applied to a
suspension in a viscous Newtonian background fluid
of an infinite lattice of one or more rigid, smooth,
neutrally buoyant particles per unit cell
(shown is the case of two per unit cell).
The suspending fluid has no-slip boundary conditions
on each particle. There are no Brownian effects.}{f:mobprob}
\efi

\section{Geometry and setup}
\label{s:setup}
We first need some geometry notation; see Figure~\ref{f:mobprob}.
At any fixed time $t$,
let $\e_1$ and $\e_2$ be two linearly independent vectors that define an infinite lattice in $\RR^2$. Fixing an arbitary center, they define a single unit cell $\cU$, with four walls comprising its boundary $\partial\cU = L\cup R\cup U\cup D$. We assume that, at this time $t$, in the unit cell $\cU$, there are a
collection of $N_o$ disjoint bounded smooth rigid objects $\{\Omega_j\}_{j=1}^{N_o}$, with boundaries $\Gamma_j=\partial \Omega_j$. Let $\Omega_{\Lambda}=\{\x\in\RR^2: \x+m_1\e_1+m_2\e_2\in \Omega_j, m_1,m_2\in \ZZ, j=1, \cdots, N_o\}$ be the infinite lattice of objects. The centroid of the object $\Omega_j$ will be $\x_j^c=\frac{1}{|\partial \Omega_j|}\int_{\partial \Omega_j} \x ds_{\x}$.
An applied shear will cause the lattice to change in time.
For convenience we fix $\e_1 = (1,0)$, but set $\e_2 = (\gamma t,1)$
for small times $t$ (see Section~\ref{s:step}), so that the shear rate is
the constant $\gamma$. \footnote{Often the shear rate is denoted by $\dot{\gamma}$; for simplicity we drop the dot.}
When explicit $t$-dependence is needed, we will write $\Omega_j(t)$, etc.

We further introduce some notations to describe the rigid body motion. Let $\x_j^{*}(0)$ be an arbitary point on $\Omega_j(0)$ that is distinct from $\x_j^c(0)$. They define a vector $\r_j(0)=\x_j^{*}(0)-\x_j^c(0)$.
At a later time $t$, they are located at $\x_j^{*}$ and $\x_j^c$, and the vector they define is $\r_j(t)=\x_j^{*}(t)-\x_j^c(t)$. We define the angular position $\theta_j(t)$ to be the angle from $\r_j(0)$ to $\r_j(t)$. Since a rigid body motion is assumed, $\theta_j(t)$ is independent of the choice of $\x_j^{*}$ and $\x_j^c$. The linear velocity is defined as $\v_j=\frac{d}{dt}\x_j^c(t)$ and the angular velocity is defined as $\omega_j=\frac{d}{dt}\theta_j(t)$.

For this problem, we have as input the initial configuration of the objects defined by $\{\Omega_j(0)\}_{j=1}^{N_o}$,
$\{\x_j^c(0)\}_{j=1}^{N_o}$ and $\{\theta_j(0)\}_{j=1}^{N_o}$, the unit cell defined by $\{\e_1(0), \e_2(0)\}$, 
and the background flow with shear rate $\gamma$ and viscosity $\mu$. We are interested in studying the evolution
of such a system. More specifically, we seek solutions of the following quantities: the velocity field of the
fluid $\u=\u(\x,t)$, the pressure field $p=p(\x,t)$, the motion of the objects
$\{\x_j^c(t),\theta_j(t)\}_{j=1}^{N_o}$, and macroscopic derived quantities such as the effective viscosity defined in \eqref{mueff}.

\section{The quasi-static mobility boundary value problem}
\label{s:bvp}

Since we are in the viscous regime, there is no inertia,
and the particle velocities $\{\v_j(t),\omega_j(t)\}_{j=1}^{N_o}$ are
determined entirely by their current locations
$\{\x_j^c(t),\theta_j(t)\}_{j=1}^{N_o}$, via solving a
quasi-static {\em mobility} problem with zero applied forces and torques.
To phrase this as a BVP we need some basic definitions in Stokes flows.
We let $\u(\x)=(u_1(\x),u_2(\x))$ be the fluid velocity and $p(\x)$ be the pressure in the suspending fluid $\RR^2\backslash \overline{\Omega}_{\Lambda}$, and let $\bsigma$ be the stress tensor associated with the flow:
\begin{equation}
\sigma_{ij}(\u,p)=-\delta_{ij} p +\mu(\frac{\partial u_i}{\partial x_j}+\frac{\partial u_j}{\partial x_i})
=-\delta_{ij} p +\mu e(\u), \qquad i,j=1,2,
\end{equation}
where $\delta_{ij}$ is the Kronecker delta, and $e(\u):=\frac{1}{2}(D\u+D\u^{T})$ is the strain tensor. The
Einstein convention of summation is used here and below.

The hydrodynamic traction $\T(\u,p)$ (force vector per unit length that a boundary surface with outward unit
normal vector $\n$ applies to the fluid) is given by:
\begin{equation}
\T(\u,p)=\bsigma\cdot \n = \left[\begin{array}{cc}
\sigma_{11} & \sigma_{12}\\
\sigma_{21} & \sigma_{22}
\end{array}\right]\left[\begin{array}{c}
n_{1}\\
n_{2}
\end{array}\right]~.
\end{equation}
For notational convenience, we also define 
$\mathbf{x}^\perp = \left[\begin{array}{c}
-x_{2}\\x_{1} \end{array}\right]$ and $\nabla^{\perp} = \left[\begin{array}{c}
-\frac{\partial }{\partial x_{2}}\\ \frac{\partial }{\partial x_{1}} 
\end{array}\right] \, .$

We seek solutions $(\u,p)$ and $\{(\v_i,\omega_i)\}_{i=1}^{N_o}$ to the following BVP:
\begin{align}
-\mu\Delta\mathbf{u}+\nabla p & =0 \quad (\x\in \RR^2\backslash \overline{\Omega}_{\Lambda}) \label{eq:StokesFlowEq}\\
\nabla\cdot\mathbf{u} & =0 \quad (\x\in \RR^2\backslash \overline{\Omega}_{\Lambda})\label{eq:MassConservation}\\
\mathbf{u}\left(\mathbf{x}\right)+\mathbf{u}_0\left(\mathbf{x}\right) & =\mathbf{v}_{i}+\omega_{i}\left(\mathbf{x} - \mathbf{x}^{c}_i\right)^{\perp} \quad (\x\in \Gamma_i,\,i=1,\cdots,N_o) \label{eq:RigidBodyMotion}\\
\int_{\Gamma_{i}}\mathbf{T}(\u,p)\,ds_{\bx} & =\mathbf{0}\label{eq:ForceCondition}\\
\int_{\Gamma_{i}}  
( \mathbf{T}(\u,p),
(\mathbf{x} -\mathbf{x}^{c}_i)^{\perp}) \, ds_{\bx} 
& =0\label{eq:TorqueCondition} \\
\u(\x+m_1\e_1+m_2\e_2) &= \u(x) \quad (m_1,m_2 \in \ZZ) \label{eq:PerBCVelocity} \\
p(\x+m_1\e_1+m_2\e_2) &= p(x) \quad (m_1,m_2 \in \ZZ)~.
\label{eq:PerBCPressure}
\end{align}
Here $(\cdot,\cdot)$ is the Euclidean inner product for vectors in $\RR^2$. $\u_0(\x)=\left[\begin{array}{c} \gamma x_2 \\ 0 \end{array}\right]$ is the background shear flow matching the shear rate $\gamma$ of the lattice.
A valid pressure to associate with it to give a Stokes solution
is simply $p_0\equiv 0$.
$(\u,p)$ can be viewed as the perturbation of the velocity and pressure field, which, once obtained,
can be added to $(\u_0,p_0)$ to recover the physical velocity and pressure $\tilde{\u}=\u+\u_0$ and $\tilde{p}=p+p_0$. 
Both $(\u_0,p_0)$ and $(\u,p)$ satisfy the governing equations of Stokes flows \eqref{eq:StokesFlowEq} and \eqref{eq:MassConservation}.
$\v_i$, $\omega_i$, and $\x_i^c$ are the linear velocity, angular velocity and centroid of the $i$th object
$\Omega_i$.
Equation \eqref{eq:RigidBodyMotion} enforces rigid body motion of $\Omega_i$.
Equations \eqref{eq:ForceCondition} and \eqref{eq:TorqueCondition} state that the net force and torque
(induced by $(\u,p)$) on the boundary $\Gamma_i$ are zero. It can be verified directly that $(\u_0,p_0)$ does not contribute
to the net force nor torque, i.e. $\int_{\Gamma_{i}}\mathbf{T}(\u_0,p_0)\,ds_{\bx}  =\mathbf{0}$
and $\int_{\Gamma_{i}}(\mathbf{T}(\u_0,p_0),(\mathbf{x} -\mathbf{x}^{c}_i)^{\perp}) \, ds_{\bx}=0$, 
which implies that the physical force and torque (induced by $(\tilde{\u},\tilde{p})$) are also zero.
Equations \eqref{eq:PerBCVelocity} and
\eqref{eq:PerBCPressure} enforce lattice periodicity of the solution. 
\begin{rmk}
The periodicity boundary condition \eqref{eq:PerBCVelocity} and \eqref{eq:PerBCPressure} is equivalent to
\begin{align}
\u_R-\u_L &= \mathbf{0} \label{eq:discrepUrl} \\
\T(\u,p)_R-\T(\u,p)_L &= \mathbf{0} \label{eq:discrepFrl} \\
\u_U-\u_D &= \mathbf{0} \label{eq:discrepUtb} \\
\T(\u,p)_U-\T(\u,p)_D &= \mathbf{0}, \label{eq:discrepFtb} 
\end{align}
where the notation $\u_R$ is used to mean the restriction of $\u$ to the wall $R$. It is straightforward to
show that equations \eqref{eq:PerBCVelocity}--\eqref{eq:PerBCPressure} imply \eqref{eq:discrepUrl}--\eqref{eq:discrepFtb}. The converse follows by the unique continuation of Cauchy data $(\u,\T)$ as a solution to the 2nd-order Stokes PDE.
\end{rmk}

Once the solution $(\u,p)$ is obtained at a given time $t$,
the effective viscosity can be retrieved (in the case where no object intersects
the bottom wall $D$) by
\begin{equation} 
  \mu_{\tbox{eff}}(t)\; \defeq \;
  \frac{1}{\gamma|\e_1|}\int_{D}
  \mbf{t} \cdot   \T(\u+\u_0,p) \,ds_{\x}
  ~,
  \label{mueff}
\end{equation}
where $\mbf{t}=(1,0)$ is the unit tangent vector on $D$.
Its interpretation is simply the total
horizontal (shear) force transmitted through the bottom wall by the fluid.
This formula bypasses the more complicated extraction of $\mu_\tbox{eff}(t)$
as a cell average of stress common in chemical engineering
\cite[Eq.~(8)]{fossbrady}.
In Sec.~\ref{s:mueff} we will present a variant of \eqref{mueff} that is
more efficient to evaluate, and which also is valid when particles
intersect $D$.

We can show that the BVP has a three-dimensional nullspace, which relies on the following lemma.
\begin{lem}
\label{lem:BoundaryTermZero}
If $(\u,p)$ satisfies \eqref{eq:StokesFlowEq}--\eqref{eq:PerBCPressure} with $\u_0=\mathbf{0}$ (homogeneous case), then
\begin{equation}
\int_{\Gamma_i} (\u,\T(\u,p))\,ds_{\x} =0~.
\end{equation}
\end{lem}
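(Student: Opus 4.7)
The plan is to prove this lemma by direct substitution, exploiting the rigid body boundary condition together with the force-free and torque-free conditions. Since $\u_0 = \mathbf{0}$, the rigid body motion condition \eqref{eq:RigidBodyMotion} reduces on $\Gamma_i$ to
\begin{equation*}
\u(\x) = \v_i + \omega_i (\x - \x_i^c)^\perp, \qquad \x \in \Gamma_i.
\end{equation*}
This is the only place where $\u$ appears in the integrand, so I would substitute it directly and use linearity of the Euclidean inner product in its first slot.

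Carrying out the substitution yields
\begin{equation*}
\int_{\Gamma_i} (\u, \T(\u,p))\, ds_\x
= \Bigl(\v_i,\, \int_{\Gamma_i} \T(\u,p)\, ds_\x\Bigr)
+ \omega_i \int_{\Gamma_i} \bigl((\x - \x_i^c)^\perp, \T(\u,p)\bigr)\, ds_\x,
\end{equation*}
where $\v_i$ and $\omega_i$ have been pulled outside the integrals since they are constants independent of $\x$. The first term on the right vanishes by the force condition \eqref{eq:ForceCondition}, and the second term vanishes by the torque condition \eqref{eq:TorqueCondition}. This gives the claimed identity.

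There is really no obstacle here: the lemma is essentially a bookkeeping statement that packages the rigid body, force-balance, and torque-balance conditions into a single orthogonality identity, of the sort that will be useful later when combining it with an energy/Green's identity argument (integrating $(\u, -\mu\Delta\u + \nabla p) = 0$ by parts over the unit cell, where the wall contributions from $\partial\cU$ will cancel by the lattice periodicity \eqref{eq:discrepUrl}--\eqref{eq:discrepFtb}) to characterize the three-dimensional nullspace of the homogeneous BVP. The only thing worth double-checking is that the sign and normal conventions in the definition of $\T$ make both boundary conditions refer to the same outward normal on $\Gamma_i$, but this is consistent throughout the setup in section~\ref{s:bvp}.
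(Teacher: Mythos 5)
Your proof is correct and matches the paper's own argument exactly: substitute the rigid body form of $\u$ on $\Gamma_i$, expand by bilinearity of the inner product, and invoke the force-free condition \eqref{eq:ForceCondition} and torque-free condition \eqref{eq:TorqueCondition} to kill both resulting terms. Your remarks about the role of this identity in the subsequent energy argument and about sign/normal conventions are also accurate.
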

\begin{proof}
\begin{align*}
\int_{\Gamma_i} \left(\u, \T(\u,p)\right) ds_{\x} &= \int_{\Gamma_{i}} \left( \v_i+\omega_i\left(\x - 
\x^c_i \right)^{\perp},\T(\u,p)\right) ds_{\x} \\
&= \left(\v_i,\int_{\Gamma_i} \T(\u,p)\, ds_{\x}\right) + \omega_i\int_{\Gamma_i} \left((x-x_i^c)^{\perp},\T(\u,p) \right)\,ds_{\x} \\
& = \left(\v_i, \mathbf{0}\right) + \omega_i \cdot 0\, =0~.
\end{align*}
\end{proof}

Combining this with the divergence theorem, we get the following lemma on the uniqueness of the solution to the BVP.
\begin{lem}
If $(\u,p)$ satisfies \eqref{eq:StokesFlowEq}--\eqref{eq:PerBCPressure} with $\u_0=\mathbf{0}$, then
$\u(x)=const$, and $p(\x)=const$.
\label{l:bvpunique}
\end{lem}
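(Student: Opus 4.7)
My plan is a standard Stokes energy (Green's identity) argument: multiply the momentum equation by $\u$, integrate over the fluid portion of a single unit cell, integrate by parts to expose the strain-rate squared, and show that every boundary contribution vanishes. Rigidity and periodicity then pin down $\u$, and the Stokes equation pins down $p$.

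Concretely, let $\cU_{\tbox{fluid}} \defeq \cU \setminus \overline{\Omega}_\Lambda$. Using $\nabla\cdot\u = 0$ and the symmetry of $\bsigma$ (which makes $\sigma_{ij}\partial_j u_i = \sigma_{ij} e_{ij}(\u) = 2\mu|e(\u)|^2$ since the $-p\,\mathrm{tr}\,e$ piece drops out), the first Green's identity gives
\begin{equation*}
0 \;=\; \int_{\cU_{\tbox{fluid}}} \u\cdot(-\mu\Delta\u + \nabla p)\,d\x \;=\; 2\mu \int_{\cU_{\tbox{fluid}}} |e(\u)|^{2}\,d\x \;-\; \int_{\partial \cU_{\tbox{fluid}}} (\u,\T(\u,p))\,ds.
\end{equation*}
The boundary $\partial\cU_{\tbox{fluid}}$ splits into the particle surfaces $\Gamma_i$ and the four walls $L,R,U,D$. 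The $\Gamma_i$ pieces vanish by Lemma~\ref{lem:BoundaryTermZero} (after accounting for the sign of the fluid-domain outward normal, which flips all $N_o$ contributions uniformly and leaves the zero right-hand side unchanged). The wall pieces cancel in pairs: by \eqref{eq:PerBCVelocity}, $\u$ takes equal values on $L$ and $R$ (resp.\ $D$ and $U$), by \eqref{eq:discrepFrl}--\eqref{eq:discrepFtb} so do the tractions, and the outward normals of $\cU_{\tbox{fluid}}$ on paired walls are opposite, producing exact cancellation. Hence $\int_{\cU_{\tbox{fluid}}} |e(\u)|^{2}\,d\x = 0$, so $e(\u)\equiv \mathbf{0}$ in the fluid.

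A $\mathbf{2}$D velocity field with vanishing symmetric gradient is a rigid motion, so on each connected component of the periodized fluid region $\u(\x) = \v + \omega\,\x^{\perp}$ for some constants $\v\in\RR^{2}$, $\omega\in\RR$. The lattice periodicity \eqref{eq:PerBCVelocity} forces $\omega\,\e_1^{\perp} = \mathbf{0}$, so $\omega=0$, and hence $\u \equiv \v$ is a global constant vector. Substituting back into \eqref{eq:StokesFlowEq} yields $\nabla p = \mathbf{0}$, so $p$ is constant as well.

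The main obstacle I expect is bookkeeping rather than analysis: reconciling normal-orientation conventions so that Lemma~\ref{lem:BoundaryTermZero} applies with the correct sign at $\Gamma_i$, and confirming that the fluid region is connected so that the rigid-motion constants $(\v,\omega)$ are truly global rather than per-component. Since the $N_o$ objects are disjoint, bounded, smooth, and strictly inside a single cell, the periodized fluid domain $\RR^{2}\setminus\overline{\Omega}_\Lambda$ is connected, and the conclusion is global as claimed.
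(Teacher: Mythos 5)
Your proposal is correct and follows essentially the same route as the paper's proof: the Green's identity / energy argument with particle-boundary terms killed by Lemma~\ref{lem:BoundaryTermZero}, wall terms cancelling by periodicity of $\u$ and $\T$, then $e(\u)\equiv 0$ forcing a rigid motion whose rotation is excluded by lattice periodicity, and finally $\nabla p=\mathbf{0}$ from the momentum equation. The only difference is cosmetic bookkeeping (you start from $\int \u\cdot(-\mu\Delta\u+\nabla p)$ while the paper starts from $\int\langle e(\u),e(\u)\rangle$), plus your explicit remark on connectivity of the fluid region, which the paper leaves implicit.
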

\begin{proof}
Letting $\left\langle \cdot,\cdot\right\rangle :\bR^{2\times2}\times\bR^{2\times2}$
be the Frobenius inner product, we have
\begin{align*}
\int_{\cU\backslash \Omega_{\Lambda}}\left\langle e\left(\mathbf{u}\right),e\left(\mathbf{u}\right)\right\rangle dV 
& =\int_{\cU\backslash \Omega_{\Lambda}}\left\langle D\mathbf{u},e\left(\mathbf{u}\right)\right\rangle dV\\
 & \hspace{-.25in} =\int_{\partial\left(\cU\backslash \Omega_{\Lambda}\right)}\left(\mathbf{u},e\left(\mathbf{u}\right)\cdot\mathbf{n}\right)ds_{\bx}
-\frac{1}{2}\int_{\cU\backslash\Omega_{\Lambda}}\left(\mathbf{u},\Delta\mathbf{u}\right)dV\\
 & \hspace{-.25in} =\int_{\partial\left(\cU\backslash\Omega_{\Lambda}\right)}\left(\mathbf{u},e\left(\mathbf{u}\right)\cdot\mathbf{n}\right)ds_{\bx}
-\frac{1}{2\mu}\int_{\cU\backslash\Omega_{\Lambda}}\left(\mathbf{u},\nabla p\right)dV\\
 & \hspace{-.25in} =\frac{1}{2\mu}\int_{\partial\left(\cU\backslash\Omega_{\Lambda}\right)}\left(\mathbf{u},\left(-p\left[\begin{array}{cc}
1 & 0\\
0 & 1
\end{array}\right]+2\mu e\left(\mathbf{u}\right)\right).\mathbf{n}\right)ds_{\bx}\quad \\
 & \hspace{-.25in} =-\frac{1}{2\mu}\sum_{i=1}^{N_o}\int_{\Gamma_{i}}\left(\mathbf{u},\mathbf{T}\right)ds_{\bx}+\frac{1}{2\mu}\int_{\partial \cU}\left(\mathbf{u},\mathbf{T}\right)ds_{\bx}\\
 & \hspace{-.25in} =\frac{1}{2\mu}\int_{\partial \cU}\left(\mathbf{u},\mathbf{T}\right) ds_{\bx} \, = 0.
\end{align*}
The second line follows from Green's first identity, the fourth line follows from the divergence theorem,
and the last line combines the result of lemma \ref{lem:BoundaryTermZero} and the periodicity of $\u$ and $\T$.

As a result, we get:
\begin{equation}
e\left(\mathbf{u}\right)\equiv\left[\begin{array}{cc}
0 & 0\\
0 & 0
\end{array}\right]~, \qquad \mathbf{x} \in \cU\backslash \Omega_{\Lambda} \, .
\end{equation}
Thus, $\mathbf{u}$ is a rigid body motion, i.e. $\u(\x)=\v_0+\omega_0(\x-\x_0)^{\perp}$.

Letting $\x_L\in L$ and $\x_R=\x_L+\e_1 \in R$, we get:
\begin{align}
\u(\x_L) &= \v_0+\omega_0(\x_L-\x_0)^{\perp} \label{eq:uL} \\
\u(\x_L+\e_1) &= \v_0+\omega_0(\x_L+e_1-\x_0)^{\perp} \label{eq:uR} 
\end{align}

Subtracting \eqref{eq:uR} and \eqref{eq:uL} and recalling the periodicity in $\u$ leads to
$\omega_0 \e_1^{\perp} = 0 $, which means $\omega_0=0$, and thus $\u(\x)=\v_0$.
Substituting this into equation \eqref{eq:StokesFlowEq} yields $\nabla p=\mbf{0}$,
which further means $p=const$.
\end{proof}

\begin{rmk}
The existence of the solution of the BVP comes from that of the integral formulation and will be proved
in section~\ref{s:bie}.
\end{rmk}

\section{Green's functions and Stokes layer potentials}
\label{s:green}

In this section we review some basic properties of the free space Stokes Green's function and layer potentials,
then propose a {\em generalized} periodic Green's function
that exists even in the case of non-zero net force in the unit cell.
Its construction, both analytically and numerically,
involves a direct sum of only the nearest free-space image sources,
plus an auxiliary Stokes solution that corrects for their
failure in periodicity.
When the net force is zero, as occurs in our application,
this Green's function will in fact be periodic.

\subsection{The free space Green's function and layer potentials}
For convenience we gather some
standard potential theory formulae for Stokes in 2D
\cite[Sec.~2.2, 2.3]{HW}, following the notation of \cite{ahb}.
The free space Green's function to the Stokes equation
(also called Stokeslet or single-layer kernel) is defined to be the tensor $G(\x,\y)$ with components:
\begin{equation}
G_{ij}(\x,\y) = \frac{1}{4\pi\mu}\left( \delta_{ij} \log \frac{1}{r}
+ \frac{r_ir_j}{r^2}\right),
\qquad i,j = 1,2, \quad \r := \x-\y, \quad r:=\|\r\|~.
\label{Gv}
\end{equation}
and the single-layer pressure kernel is the vector $G^p$ with components
\begin{equation}
G^p_j(\x,\y) = \frac{1}{2\pi}\frac{r_j}{r^2}
~,\qquad j=1,2~.
\label{Gp}
\end{equation}
We will also need the single-layer traction kernel $G^t$ with components
\begin{equation}
G^t_{ik}(\x,\y) = \sigma_{ij}(G_{\cdot,k}(\cdot,\y),G^p_k(\cdot,\y)) (\x) \nx_j
= -\frac{1}{\pi}\frac{r_ir_k}{r^2}\frac{\r\cdot\nx}{r^2}
~,\qquad i,k=1,2~,
\label{Gt}
\end{equation}
where the target $\x$ is assumed to be on a surface with normal $\nx$.

The Stokeslet allows us to express the velocity field $\u=(u_1,u_2)$ and pressure field $p$ induced by a point force
$\f=(\f_1,\f_2)$ at $\y_0=(y_{01}, y_{02})$ as:
\begin{equation}
u_i=G_{ij}(\x,\y_0) \f_j,\qquad p=G^p_{j}(\x,\y_0) \f_j.
\end{equation}

Let $\Gamma$ be a smooth closed curve in $\RR^{2}$ and let $\Omega^{\mp}$
denote the domains corresponding to the interior and exterior of 
$\Gamma$. Let $\n_{\x_0}$ be the unit outward normal to the curve $\Gamma$ at $\x_{0}\in\Gamma$ .
The single layer Stokes potential represents the velocity field $\u(\x)$ and pressure field $p(\x)$
induced by a surface force $\bsigma(\x)=(\sigma_1(\x),\sigma_2(\x))$ on a boundary $\Gamma$ as:
\begin{equation}
\u(\x)=\cs_{\Gamma}[\bsigma](\x)=\int_{\Gamma} G(\x,\y) \bsigma(\y)\,ds_{\y},\qquad p(\x)=\cs^p_{\Gamma}[\bsigma]=\int_{\Gamma} G^p(\x,\y) \bsigma(\y)\,ds_{\y}.
\end{equation}

The associated traction is given by:
\begin{equation}
\T(\x)=\cs^t_{\Gamma}=\int_{\Gamma} G^t(\x,\y) \bsigma(\y)\,ds_{\y}.
\end{equation}

It is well known in classical potential theory that the single layer potential has the following properties.

\begin{lem}{(the jump relation)} \label{lem:jumprelation}
Let $\u(\x)=\cs_{\Gamma}[\bsigma](\x)$ and $\T(\x)=\cs^t_{\Gamma}[\bsigma](\x)$ be the velocity and traction of a single layer potential with density $\bsigma$ defined on $\Gamma$. Then
$\cs_{\Gamma}\bsigma\left(\x\right)$ satisfies the Stokes equations \eqref{eq:StokesFlowEq} and \eqref{eq:MassConservation} in $\RR^{2}\backslash\Gamma$ and continuous
in $\RR^{2}$. The single layer traction satisfies
the jump relations:
\begin{equation}
\lim_{\substack{\x\to\x_0 \\ \x\in \Omega^{\pm}}} 
\T(\x)=
\mp\frac{1}{2}\bsigma\left(\x_0\right)+
\oint_{\Gamma} G^t(\x_0,\y)\bsigma\left(\y\right)ds_{\y} \label{SLnormaljump}
\end{equation}
where $\oint_{\Gamma}$ indicates the principal value integral
over the curve $\Gamma$.
\end{lem}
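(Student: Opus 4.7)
The lemma packages three claims: (i) the single-layer velocity and its pressure solve the Stokes PDE off $\Gamma$; (ii) the velocity extends continuously across $\Gamma$; and (iii) the traction has the stated jump. The first two are essentially routine consequences of the pointwise Stokes identity $-\mu\Delta_\x G(\x,\y) + \nabla_\x G^p(\x,\y) = 0$, $\nabla_\x\cdot G(\x,\y) = 0$ for $\x\ne\y$, and of the fact that the Stokeslet $G_{ij}$ in \eqref{Gv} has only a $\log r$ singularity. Specifically, for $\x\notin\Gamma$ the integrand in $\cs_\Gamma[\bsigma]$ is smooth in $\x$, so differentiation under the integral yields (i); and since $\log|\cdot-\y|$ is locally integrable in 2D and $|G_{ij}(\x,\y)|\lesssim |\log|\x-\y||+1$ uniformly, dominated convergence / a standard mollification argument gives (ii). I would dispatch these two claims quickly and direct the reader to \cite{HW} for the classical treatment.

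The substance lies in (iii). My plan is to localize and then freeze the density. Pick $x_0\in\Gamma$ and a small arc $\Gamma_\epsilon\subset\Gamma$ centered at $\x_0$, and split
\begin{equation}
\cs^t_\Gamma[\bsigma](\x) \;=\; \int_{\Gamma\setminus\Gamma_\epsilon} G^t(\x,\y)\bsigma(\y)\,ds_\y \;+\; \int_{\Gamma_\epsilon}G^t(\x,\y)\bigl[\bsigma(\y)-\bsigma(\x_0)\bigr]\,ds_\y \;+\; \bsigma(\x_0)\!\int_{\Gamma_\epsilon}G^t(\x,\y)\,ds_\y.
\end{equation}
The first integrand is smooth in $\x$ on a neighborhood of $\x_0$ (the kernel is analytic away from the diagonal), so that term is continuous across $\Gamma$ at $\x_0$. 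For the second term, the H\"older smoothness of $\bsigma$ yields $|\bsigma(\y)-\bsigma(\x_0)| \lesssim |\y-\x_0|^\alpha$, which when combined with the $1/r$-type singularity in \eqref{Gt} gives a uniformly integrable bound, hence continuity in $\x$ across $\Gamma$ by dominated convergence. All of the jump, then, is carried by the last term.

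The main obstacle is evaluating $\lim_{\x\to\x_0,\,\x\in\Omega^\pm}\int_{\Gamma_\epsilon} G^t(\x,\y)\,ds_\y$. I would flatten $\Gamma$ near $\x_0$: parametrize $\Gamma_\epsilon$ by arclength, place $\x_0$ at the origin with tangent along $\mbf{e}_1$, and approach along the inward/outward normal $\x=h\nx$ with $h\to 0^\mp$ (so that $\x\in\Omega^\mp$). To leading order $\y = (s,0) + O(s^2)$ and the curvature correction contributes only a continuous remainder, which can be controlled by the same H\"older argument as above. On the flat model, with $\nx = \mbf{e}_2$, one has $\r\cdot\nx = -h$ and $\r=(-s,h)$, so the kernel reduces to the explicit one-dimensional integrals
\begin{equation}
\int_{-\epsilon}^{\epsilon} -\frac{1}{\pi}\frac{r_i r_k}{(s^2+h^2)^2}\,(-h)\,ds, \qquad r_1 = -s,\ r_2 = h.
\end{equation}
These are elementary; the off-diagonal piece ($i\ne k$) is odd in $s$ and vanishes, while each diagonal piece reduces to standard Poisson-type integrals whose limit as $h\to 0^\pm$ yields exactly $\mp\tfrac12\delta_{ik}$, independent of $\epsilon$. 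Multiplying by $\bsigma(\x_0)$ produces the $\mp\tfrac12\bsigma(\x_0)$ jump term. Taking $\epsilon\to 0$ in the remaining (continuous) pieces gives, by definition, the principal-value integral, completing \eqref{SLnormaljump}. The bookkeeping of signs (orientation of $\nx$, side $\Omega^\pm$, and the factor $\r\cdot\nx$) is where errors are likeliest; I would verify it on the check case of a circular $\Gamma$ with $\bsigma\equiv$ const, where both sides can be computed in closed form.
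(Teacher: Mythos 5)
The paper does not actually prove this lemma: it is stated as classical and deferred to the potential-theory literature (the citation to \cite[Sec.~2.2, 2.3]{HW}), so there is no in-paper argument to compare against. Your sketch is the standard textbook route --- differentiate under the integral off $\Gamma$ and use weak (logarithmic) singularity for continuity of $\cs_\Gamma[\bsigma]$; then for the traction, subtract off the frozen density $\bsigma(\x_0)$, control the H\"older remainder by dominated convergence (the nascent-delta kernel times $|\y-\x_0|^\alpha$ is dominated by an integrable $|s|^{\alpha-1}$ bound uniformly in the approach distance), and reduce the frozen-density term to explicit Poisson-type integrals on a flattened arc --- and it is structurally sound. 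This is exactly the argument one finds in the references the paper leans on, so nothing is lost relative to the paper.

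One concrete slip to fix: your flat-model bookkeeping is internally inconsistent. With the paper's convention $\r:=\x-\y$, target $\x=h\nx=(0,h)$ and source $\y=(s,0)$, you correctly get $\r=(-s,h)$, $r_2=h$, but then $\r\cdot\nx=+h$, not $-h$ as written. If the $(-h)$ in your displayed integrand is carried through mechanically, the diagonal Poisson integrals give $+\tfrac12\,\mathrm{sgn}(h)\,\delta_{ik}$, i.e.\ $\pm\tfrac12\bsigma(\x_0)$ for $\x\in\Omega^\pm$, the opposite of \eqref{SLnormaljump}. With the correct factor $+h$, the diagonal terms integrate (via $\int_{\RR} (s^2+h^2)^{-2}ds=\pi/2|h|^3$ and $\int_{\RR} s^2(s^2+h^2)^{-2}ds=\pi/2|h|$) to $-\tfrac12\,\mathrm{sgn}(h)\,\delta_{ik}$, which is the stated $\mp\tfrac12$ for $\Omega^\pm$ since $h>0$ corresponds to the exterior. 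You assert the correct final signs and flag sign bookkeeping as the risk, so this is a typo-level inconsistency rather than a gap in the method, but as written the displayed computation proves the wrong sign; your proposed sanity check (constant density on a circle) would catch it.
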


\begin{lem} \label{lem:netquantities}
Let $\u(\x)=\cs_{\Gamma}[\bsigma](\x)$ and $\T(\x)=\cs^t_{\Gamma}[\bsigma](\x)$ be the velocity and traction of a single layer potential with density $\bsigma$ defined on $\Gamma$.
Let $\T_{-}$ and $\T_{+}$ denote the limits of the traction from the interior and exterior respectively.
We have the following:
\begin{equation}
\int_{\Gamma}\T_{+} ds_{\x} =
-\int_{\Gamma}\bsigma\left(\x\right)ds_{\bx} \, ,\quad
\int_{\Gamma}\T_{-} \, ds_{\x} = \mathbf{0}
\label{eq:SLGaussLaw1StokesForce}
\end{equation}
and
\begin{align}
\int_{\Gamma}\left( \left(\x-
\x^c\right)^{\perp} ,\T \right)_{+} ds_{\bx}  & =
-\int_{\Gamma}\left( \left(\x-\x^c\right)^{\perp} ,
\bsigma \right)ds_{\bx}\label{eq:SLGaussLaw1StokesTorque} \, ,\\
\int_{\Gamma}\left( \left(\x-\x^c\right)^{\perp} ,
\T \right)_{-} ds_{\x}  & =0 \label{eq:SLGaussLaw2StokesTorque}\, .
\end{align}
and
\begin{equation}
\int_{\Gamma} \u\cdot\n\, ds_{\x}=0\,, \label{eq:SLNetFlux}
\end{equation}
which can be interpreted as the net force, torque and flux respectively.
\end{lem}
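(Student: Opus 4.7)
The plan is to derive all three identities by applying the divergence theorem in the bounded interior region $\Omega^-$, exploiting (a) the fact that $\cs_\Gamma[\bsigma]$ is a smooth Stokes solution in $\Omega^-$ up to and including its boundary trace from inside, and (b) the jump relation from Lemma~\ref{lem:jumprelation}. This avoids any need to work in the unbounded exterior $\Omega^+$.

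First I would treat the two interior identities $\int_\Gamma \T_- \, ds = \mathbf{0}$ and $\int_\Gamma ((\x-\x^c)^\perp, \T)_- \, ds = 0$. Write $(\u,p) = (\cs_\Gamma[\bsigma], \cs^p_\Gamma[\bsigma])$, which satisfies the Stokes equations in $\Omega^-$; in particular the Cauchy stress $\bsigma(\u,p)$ is divergence free and symmetric there. Applying the divergence theorem to the $j$-th row of $\bsigma(\u,p)$ over $\Omega^-$ with outward normal $\n$ yields $\int_\Gamma \T_- \, ds = \int_{\Omega^-}\nabla\cdot \bsigma(\u,p)\, dV = \mathbf{0}$. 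For the torque identity, use the standard identity $\nabla\cdot(\bsigma(\u,p) \cdot (\x-\x^c)^\perp) = (\nabla\cdot \bsigma(\u,p))\cdot (\x-\x^c)^\perp + \operatorname{tr}(\bsigma(\u,p) \cdot J)$, where $J$ is the Jacobian of $\x \mapsto (\x-\x^c)^\perp$. Because $J$ is antisymmetric and $\bsigma(\u,p)$ is symmetric, the trace vanishes; the first term also vanishes, so the integrand on $\Omega^-$ is zero and the surface integral on $\Gamma$ of $((\x-\x^c)^\perp, \T)_-$ is zero.

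Next, the exterior identities follow directly from the jump relation in Lemma~\ref{lem:jumprelation}, which reads $\T_+(\x_0) - \T_-(\x_0) = -\bsigma(\x_0)$ for $\x_0\in\Gamma$. Integrating this along $\Gamma$ and substituting the interior identities already established gives
\begin{equation*}
\int_\Gamma \T_+ \, ds_{\x} = \int_\Gamma \T_- \, ds_{\x} - \int_\Gamma \bsigma \, ds_{\x} = -\int_\Gamma \bsigma \, ds_{\x},
\end{equation*}
and, after taking the inner product of the jump relation with $(\x-\x^c)^\perp$ before integrating,
\begin{equation*}
\int_\Gamma ((\x-\x^c)^\perp, \T)_+ \, ds_{\x} = -\int_\Gamma ((\x-\x^c)^\perp, \bsigma) \, ds_{\x}.
\end{equation*}

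Finally, for the flux identity, recall from Lemma~\ref{lem:jumprelation} that $\u = \cs_\Gamma[\bsigma]$ is continuous across $\Gamma$, so its interior trace coincides with the boundary value. Since $\nabla\cdot \u = 0$ in $\Omega^-$, the divergence theorem gives $\int_\Gamma \u\cdot\n\, ds_{\x} = \int_{\Omega^-}\nabla\cdot\u\, dV = 0$. The only mild subtlety in the whole argument is justifying the divergence theorem up to $\Gamma$, which follows from the smoothness of $\cs_\Gamma[\bsigma]$ in $\overline{\Omega^-}$ (inherited from the smoothness of $\Gamma$ and $\bsigma$); no work on the unbounded exterior $\Omega^+$ is needed, which is the main practical benefit of combining the interior divergence theorem with the jump relation.
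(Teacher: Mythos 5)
Your proof is correct. The paper does not actually give a proof of Lemma~\ref{lem:netquantities}; it states it as a known fact of classical potential theory (immediately after introducing the jump relation Lemma~\ref{lem:jumprelation}). Your argument---applying the divergence theorem in $\Omega^-$ to the divergence-free, symmetric Stokes stress $\bsigma(\u,p)$ to get the interior force and torque identities and the flux identity, then transferring to the exterior via $\T_+ - \T_- = -\bsigma$ from \eqref{SLnormaljump}---is the standard derivation and is complete and sound. One detail worth making explicit if the proof were to be written out: the torque computation $\sigma_{ij}\,\partial_j\bigl((\x-\x^c)^\perp\bigr)_i = \sigma_{21}-\sigma_{12} = 0$ uses symmetry of $\bsigma$, which is available because the Stokes stress $-p\,\Id + 2\mu\,e(\u)$ is symmetric by construction, as you note.
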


\subsection{A generalized periodic Stokes Green's function}
\label{subsec:gper}

We now must generalized the concept of a periodic Green's function,
since a single Stokeslet source (having non-zero net force)
cannot be periodized.
This fact is analogous to the non-existence of a domain Green's function
for the interior Laplace Neumann problem;
yet, in that setting there is a ``Neumann function'' \cite{kelloggbook}
which, when applied to zero-mean data,
recovers a solution obeying the Neumann boundary conditions.
Our generalized Green's function is analogous to the Neumann function:
it distributes the net force in a prescribed fashion on the boundary,
so that when the net force is non-zero, the traction is not periodic across unit
cell walls (Remark~\ref{r:notper}).
One motivation is rigorous analysis; another is
to numerically handle non-zero net force case in a predictable,
linear way, for stability in the iterative solver.

\begin{definition}  
  Let $\cU$ be a unit cell with four walls comprising its boundary $\partial\cU = L\cup R\cup U\cup D$.
  For each source point $\y\in\cU$,
  the generalized periodic Green's function, as a function of $\v\in\cU$,
  is defined as $G_{per}(\x,\y)\defeq [\w_{(1,0)}(\x), \; \w_{(0,1)}(\x)]$,
  where for each of the two choices of $\f$, the pair
  $(\w_{\f},q_\f)$ solves the Stokes BVP:
\begin{align}
-\mu\Delta \w+\nabla q &= \f \delta_\y \qquad \mbox{ in } \cU \label{eq:gperEqStokes}\\
\nabla\cdot \w &= 0  \qquad\;\; \mbox{ in } \cU \label{eq:gperEqMassConserv} \\
\w_R-\w_L &= \mbf{0}  \label{eq:gperdiscrep1} \\
\T(\w,q)_R-\T(\w,q)_L &= \mbf{f}/2|\e_2| \label{eq:gperdiscrep2} \\
\w_U-\w_D &= \mbf{0} \label{eq:gperdiscrep3} \\
\T(\w,q)_U-\T(\w,q)_D &= \mbf{f}/2|\e_1|\, . \label{eq:gperdiscrep4}
\end{align}
Here $\e_1$ and $\e_2$ are the unit cell lattice vectors.
The associated periodic pressure and traction kernels are defined as:
\begin{align}
G^p_{per}(\x,\y) &:= [q_{(1,0)}(\x),q_{(0,1)}(\x)] \\
G^t_{per}(\x,\y) &:= [\T(\w_{(1,0)},q_{(1,0)})(\x), \T(\w_{(0,1)},q_{(0,1)})(\x)]
~.
\end{align}
\end{definition}   

We will prove later in this section that the generalized periodic Green's function $G_{per}$ is determined only
up to a constant $\c\in\RR^2$. Similarly $G^p_{per}$ is determined up tp a constant $c$ and $G^t_{per}$ is
determined up to a constant $c\n_{\x}$. $G_{per}$ is a smooth perturbation
of $G$, and the difference can be found by solving the following subproblem that we call the ``empty BVP.''
This is a periodic version of the construction of 
the domain Green's function in PDE theory \cite[Sec.~7.H]{Follandbook}.

For a given discrepancy vector $\g=[\g_1,\g_2,\g_3,\g_4]$, the empty BVP (EBVP)
is to find functions $(\v,q)$ solving:  
\begin{align}
-\mu\Delta \v+\nabla q &= \mbf{0} \qquad \mbox{ in } \cU \label{eq:emptyBoxEq1} \\
\nabla\cdot \v &= 0  \qquad\;\; \mbox{ in } \cU \label{eq:emptyBoxEq2} \\
\v_R-\v_L &= \mbf{g}_1  \label{eq:emptyBoxDiscrep1}\\
\T(\v,q)_R-\T(\v,q)_L &= \mbf{g}_2  \label{eq:emptyBoxDiscrep2}\\
\v_U-\v_D &= \mbf{g}_3 \label{eq:emptyBoxDiscrep3}\\
\T(\v,q)_U-\T(\v,q)_D &= \mbf{g}_4 \label{eq:emptyBoxDiscrep4}\, ,
\end{align}

The following extends the consistency and uniqueness result in \cite[Prop.~4.2]{ahb} to include existence, which was not proven in that work.
We are interested in smooth
solutions, which requires a technical condition that $\g$ is generated
from a smooth flow and pressure function pair $(\mbf{w},r)$.
Note that $(\mbf{w},r)$ need not satisfy the Stokes equations, and $\mbf{w}$
need not even be divergence-free.
The proof of the following is in \ref{a:EBVP}.

\begin{theorem} \label{t:EBVP}
  Let the discrepancy data $\g=[\g_1,\g_2,\g_3,\g_4]$
  be {\em smooth} in the sense that it is generated as follows from a
  $C^\infty(\overline{\cU})$ pair $(\mbf{w},r)$:
\begin{align}
  \g_1 &= \mbf{w}_R-\mbf{w}_L \label{g1} \\
  \g_2 &=    \T(\mbf{w},r)_R-\T(\mbf{w},r)_L \\
  \g_3 &=      \mbf{w}_U-\mbf{w}_D \\
  \g_4 &= \T(\mbf{w},r)_U-\T(\mbf{w},r)_D \label{g4}
  ~.
\end{align}
Let $\g$ also be {\em consistent} in the sense that three
scalar conditions hold:
\begin{align}
\int_L \mbf{g}_2 ds + \int_D \mbf{g}_4 ds &= \mbf{0} \qquad 
\mbox{{\rm (zero net force)}, and}
\label{gforce}
\\
\int_L n\cdot \mbf{g}_1 ds + \int_D n\cdot \mbf{g}_3 ds &= 0 \qquad 
\mbox{{\rm (volume conservation)}.}
\label{gvol}
\end{align}
Then there exists a $C^\infty(\cU)$ solution $(\v,q)$ to the EBVP
\eqref{eq:emptyBoxEq1}--\eqref{eq:emptyBoxDiscrep4},
and its nullity is 3.
Specifically, the solution is
unique up to translational flow and additive pressure constants,
with solution space $(\v +\mbf{c},q + c)$, for $(\mbf{c},c) \in \RR^3$.
\end{theorem}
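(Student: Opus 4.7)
The plan is to split the statement into a uniqueness part (giving nullity exactly $3$) and an existence part, of which only existence is substantive. For uniqueness, suppose $(\v,q)\in C^\infty(\overline\cU)$ solves the EBVP with $\g=\mbf{0}$, so $(\v,q)$ and $\T(\v,q)$ are fully periodic across both wall pairs. The chain of integration-by-parts identities used to prove Lemma~\ref{l:bvpunique}, now applied on all of $\cU$ (no interior obstacles), reduces $\int_\cU\langle e(\v),e(\v)\rangle\,dV$ to boundary integrals on $\partial\cU$ that cancel pairwise by periodicity. Hence $e(\v)\equiv 0$, so $\v$ is a rigid motion $\v_0+\omega_0(\x-\x_0)^\perp$; matching values across $L$ and $R$ forces $\omega_0\e_1^\perp=\mbf{0}$, hence $\omega_0=0$ and $\v\equiv\mbf{c}$. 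Substitution into \eqref{eq:emptyBoxEq1} then gives $\nabla q=\mbf{0}$, so $q\equiv c$. Conversely every $(\mbf{c},c)\in\RR^3$ is a solution, so the nullity is exactly three.

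For existence, I would use the given lifting $(\mbf{w},r)$ to move all inhomogeneity off the boundary: set $(\v,q)=(\mbf{w},r)+(\tilde\v,\tilde q)$, so that $(\tilde\v,\tilde q)$ must be \emph{fully periodic} on $\cU$ and satisfy the forced Stokes system
\begin{equation*}
-\mu\Delta\tilde\v+\nabla\tilde q=\f:=\mu\Delta\mbf{w}-\nabla r,\qquad\nabla\cdot\tilde\v=h:=-\nabla\cdot\mbf{w},
\end{equation*}
with smooth $(\f,h)$. This periodic problem is solvable on the flat torus $\cU/\Lambda$ modulo constants provided the zero-Fourier-mode compatibility conditions $\int_\cU h\,dV=0$ and $\int_\cU\f\,dV=\mbf{0}$ hold. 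The first follows from the divergence theorem and the reorganization of $\int_{\partial\cU}\mbf{w}\cdot\n^{\rm out}\,ds$ into jumps across opposite walls, which equals $\int_L\n\cdot\g_1\,ds+\int_D\n'\cdot\g_3\,ds$ and vanishes by \eqref{gvol}. For the second, the algebraic identity $f_i=\partial_j\sigma_{ij}(\mbf{w},r)-\mu\,\partial_i(\nabla\cdot\mbf{w})$ splits $\int_\cU\f\,dV$ into a boundary piece $\int_{\partial\cU}\T(\mbf{w},r)\,ds=\int_L\g_2\,ds+\int_D\g_4\,ds=\mbf{0}$ (by \eqref{gforce}) and a pure-gradient piece $-\mu\int_\cU\nabla(\nabla\cdot\mbf{w})\,dV$ which is removed by a pressure gauge; concretely, decompose $\tilde\v=\nabla\phi+\mbf{u}_2$, where $\phi$ is the periodic solution of $\Delta\phi=h$ (solvable since $\int h=0$), and $\mbf{u}_2$ is divergence-free. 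The remaining system for $(\mbf{u}_2,q')$ is a divergence-free periodic Stokes source problem with mean-zero right-hand side, which is inverted mode-by-mode in a Fourier series on the skew lattice via the invertible Stokes symbol at each nonzero wavenumber; smoothness follows from the rapid Fourier decay of $C^\infty$ data.

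The main obstacle is translating the two scalar boundary consistency conditions \eqref{gforce}--\eqref{gvol} into vector mean-zero conditions on the interior source, as required by Fourier inversion. The divergence-side match is direct; the momentum-side match is more delicate and requires a Helmholtz-type splitting (or equivalent pressure gauge) to absorb the leftover $\mu\nabla(\nabla\cdot\mbf{w})$ term, which the lifting $(\mbf{w},r)$ generically produces even though $\mbf{w}$ is not divergence-free. Once this reduction is carried out, existence, uniqueness up to the $3$-dim constant nullspace, and $C^\infty(\overline\cU)$ regularity all follow from standard Fourier analysis on a skew $2$-torus, with the nullity counts matching on both sides of the reduction: the three free constants correspond to the zero Fourier modes of $(\tilde\v,\tilde q)$.
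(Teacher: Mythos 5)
Your uniqueness argument is sound and is essentially the energy-integral argument the paper proves as Lemma~\ref{l:bvpunique} (for Theorem~\ref{t:EBVP} the paper instead cites \cite[Prop.~4.2]{ahb} for the nullity count, so a direct proof is a fair substitute). Your existence argument takes the same route as \ref{a:EBVP}: lift by $(\mbf{w},r)$, reduce to a fully periodic inhomogeneous Stokes problem on the flat torus with forcing $\mbf{h}:=\mu\Delta\mbf{w}-\nabla r$ and divergence source $h:=-\nabla\cdot\mbf{w}$, and invoke solvability subject to the mean-zero compatibility conditions $\int_\cU h=0$, $\int_\cU\mbf{h}=\mbf{0}$.

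The delicate point you flag is genuine, and in fact the paper itself glosses over it: the step ``$\int_\cU\mbf{h}=\int_{\pcU}\T(\mbf{w},r)\,ds$ by the divergence theorem for Stokes'' is valid only when $\nabla\cdot\mbf{w}=0$, whereas in general $\int_\cU\mbf{h}=\int_{\pcU}\T(\mbf{w},r)\,ds-\mu\int_{\pcU}(\nabla\cdot\mbf{w})\,\n\,ds$, and the extra term need not vanish under the stated hypotheses. (For instance, on the unit square with $\mbf{w}=(x_1^2-x_1,0)$, $r=4\mu x_1$, all four discrepancies $\g$ vanish identically, so consistency holds trivially, yet $\mbf{h}\equiv(-2\mu,0)$ has nonzero mean and the torus problem as posed is not solvable -- so the reduction fails even though the EBVP with $\g=\mbf{0}$ obviously has the constant solutions.) Unfortunately the repair you propose does not close the gap: the Helmholtz split $\tilde\v=\nabla\phi+\u_2$ with $\Delta\phi=h$ leaves the momentum forcing for $\u_2$ equal to $\mbf{h}$, and the ``pressure gauge'' that would subtract $\mu\nabla\cdot\mbf{w}$ to cancel the leftover $-\mu\nabla(\nabla\cdot\mbf{w})$ is inadmissible because $\nabla\cdot\mbf{w}$ is generally \emph{not} periodic on the torus -- it is only $C^\infty(\overline\cU)$ -- so the shifted pressure would carry a wall discrepancy and spoil $\g_2,\g_4$ when you reassemble $(\v,q)=(\mbf{w},r)+(\tilde\v,\tilde q)$. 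Modifying the lift by a periodic pair $(\bm{\psi},s)$ cannot help either, since $\int_{\pcU}(\nabla\cdot\bm{\psi})\,\n\,ds$ vanishes automatically. So both your proof and the paper's share an unresolved step; a rigorous completion would have to show directly (e.g.\ by a Fredholm/duality argument for the torus-with-seam problem) that the cokernel is spanned exactly by the three consistency functionals, or else exhibit a lift satisfying the additional constraint $\int_{\pcU}(\nabla\cdot\mbf{w})\,\n\,ds=\mbf{0}$.
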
   

Utilizing the solution to this subproblem as a building block, we now proceed to construct the periodic Green's
function, or equivalently, the solution of the BVP \eqref{eq:gperEqStokes}--\eqref{eq:gperdiscrep4} for a given point force $\f=(f_1,f_2)$ at $\y=(y_1,y_2)$.
We begin by defining a nearby image source sum, and a metric of the failure
to be periodic.

\begin{definition} \label{def:Gnear}
Let $\y\in\cU$ be a point in the unit cell, and $\f\in\RR^2$ be an arbitary vection.
Let $\tilde{\cU}$ be the expanded unit cell with the same center and walls parallel to that of the original cell
and twice the length (as is illustrated in Fig \ref{f:periebvp}) $w^{near}$ and $q^{near}$ are defined to be the sum of neighboring copies of the free space Stokeslets that are inside $\tilde{\cU}$, i.e.
\begin{align}
\w^{near}=G_{near}(\x,\y)\f &\defeq \sum_{m,n\in\ZZ: \y+m\e_1+n\e_2\in \tilde{\cU}} G(\x,\y+m\e_1+n\e_2)\f \\
q^{near}=G^{p}_{near}(\x,\y)\cdot\f &\defeq \sum_{m,n\in\ZZ: \y+m\e_1+n\e_2\in \tilde{\cU}} G^p(\x,\y+m\e_1+n\e_2)\cdot\f.
\end{align}
\end{definition}

\begin{definition}
Let $\tilde{\cU}$, $\y$ and $\f$ be the same as in Definition \ref{def:Gnear}. $w^{corr}$ and $q^{corr}$ are
defined to be the solution to the empty box BVP \eqref{eq:emptyBoxEq1}--\eqref{eq:emptyBoxDiscrep4}, with the ``discrepancy vector" $\g=[\g_1;\g_2;\g_3;\g_4]$ as follows:
\begin{align}
\g_1 &= -(\w^{near}_R-\w^{near}_L) \label{eq:discrepCorr1} \\
\g_2 &= -(\T^{near}_R-\T^{near}_L) +\f/2|\e_2| \label{eq:discrepCorr2} \\
\g_3 &= -(\w^{near}_U-\w^{near}_D) \label{eq:discrepCorr3} \\
\g_4 &= -(\T^{near}_U-\T^{near}_D) +\f/2|\e_1| \label{eq:discrepCorr4} \,.
\end{align}
\end{definition}

We now claim that the specific choice of $\g$ just defined leads
to existence of an EBVP solution $(\v,q)$. The following is
proved in \ref{a:cons}.

\begin{theorem} \label{t:consistency}   
  Let the discrepancy vector $\g=[\g_1;\g_2;\g_3;\g_4]$ be given by
  \eqref{eq:discrepCorr1}--\eqref{eq:discrepCorr4}.
  Then a solution $(\v,q)$ to \eqref{eq:emptyBoxEq1}--\eqref{eq:emptyBoxDiscrep4}
  exists and is unique up to additive constants in $\v$ and $q$. 
\end{theorem}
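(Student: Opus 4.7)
The plan is to verify the two hypotheses of Theorem~\ref{t:EBVP}---smoothness of $\g$ and the consistency conditions \eqref{gforce}--\eqref{gvol}---for the specific discrepancy $\g=[\g_1;\g_2;\g_3;\g_4]$ defined in \eqref{eq:discrepCorr1}--\eqref{eq:discrepCorr4}. Once both hold, Theorem~\ref{t:EBVP} immediately supplies a $C^\infty(\cU)$ solution $(\v,q)$ with the claimed three-dimensional nullspace, which is exactly the statement.

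For \emph{smoothness}, note that since $\y$ lies in the open interior of $\cU$, each of the finitely many image sources $\y+m\e_1+n\e_2\in\tilde{\cU}$ sits at positive distance from $\partial\cU$, so $\w^{near}$ and $q^{near}$ are real-analytic on an open neighborhood of $\partial\cU$. A $C^\infty(\overline{\cU})$ generating pair in the sense of \eqref{g1}--\eqref{g4} is then constructed as $(\mbf{w},r) = -\chi(\w^{near},q^{near}) + (\mbf{w}_1,r_1)$, where $\chi$ is a smooth cutoff equal to $1$ in a neighborhood of $\partial\cU$ and to $0$ inside small balls around each image source in $\cU$, and $(\mbf{w}_1,r_1)$ is a low-degree polynomial pair (for instance $\mbf{w}_1$ quadratic in $\x$ and $r_1$ linear) tuned to give vanishing velocity discrepancies and the constant traction discrepancies $\f/(2|\e_2|)$ and $\f/(2|\e_1|)$ across opposite walls; the existence of such $(\mbf{w}_1,r_1)$ is a finite-dimensional linear-algebra exercise.

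For \emph{consistency}, both scalar conditions reduce to divergence-theorem identities applied to $\w^{near}$ on $\cU$. Since every Stokeslet velocity is divergence-free (even distributionally at its source), so is $\w^{near}$, and $\int_{\partial\cU}n\cdot\w^{near}\,ds=0$; rewriting this boundary flux via the translations $R=L+\e_1$ and $U=D+\e_2$ immediately yields \eqref{gvol}. For \eqref{gforce}, the Stokes momentum equation gives $\nabla\cdot\bsigma(\w^{near},q^{near}) = -\f\,\delta_\y$, since the central image at $\y$ is the only source of $\w^{near}$ interior to $\cU$; the divergence theorem therefore produces a net traction contribution from $\w^{near}$ across $\partial\cU$ of magnitude $|\f|$, while the constants $\f/(2|\e_2|)$ and $\f/(2|\e_1|)$ integrate to $\f/2$ each over $L$ and $D$, summing to $\f$, and the signs in \eqref{eq:discrepCorr2}--\eqref{eq:discrepCorr4} are precisely those needed for the two contributions to cancel.

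Uniqueness modulo additive constants in $\v$ and $q$ is then automatic from Theorem~\ref{t:EBVP}. The chief bookkeeping obstacle---and the reason the particular $1/2$ factor appears in \eqref{eq:discrepCorr2}--\eqref{eq:discrepCorr4}---is to align the sign conventions in the ``$\T_R-\T_L$'' and ``$\T_U-\T_D$'' expressions (which in the paper's convention vanish for truly periodic stress fields, i.e.\ are computed with a common-direction normal on opposing walls rather than with individual outward normals) with the divergence-theorem calculation, so that the net Stokeslet force is split symmetrically as $\f/2+\f/2$ between the two pairs of opposing walls, independent of where $\y$ sits inside $\cU$.
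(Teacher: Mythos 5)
Your proposal is correct and follows essentially the same route as the paper's own proof: verify the two hypotheses of Theorem~\ref{t:EBVP} for this specific $\g$ (consistency via divergence-theorem/Gauss-law identities using that only the central image source lies inside $\cU$, and smoothness via a cutoff of the near-field sum near $\partial\cU$ plus an explicit smooth correction producing the constant traction jumps $\f/2|\e_2|$ and $\f/2|\e_1|$), then invoke that theorem for existence and the nullspace. The only cosmetic difference is that you assert the existence of the low-degree polynomial correction by a finite-dimensional linear-algebra argument, whereas the paper exhibits an explicit quadratic flow.
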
    

\bfi  
\centering\ig{width=2.5in}{periebvp.eps}
\ca{Illustration of the solution method for the ``empty box'' BVP
which is used to correct the deviation from periodicity in the near image sum.}{f:periebvp}
\efi

Note that the inclusion of all the image sources lying in an dilation of the
unit cell in $\w^{near}$ and $q^{near}$ from Definition~\ref{def:Gnear}
actually results in cancellations of all nearby source contributions
to the discrepancy $\g$; see \cite{ahb}.
This results in $\g$ being smooth, so that the solution
of the ``empty BVP'' (section~\ref{s:empty})
will be rapidly convergent, needing only $\bigO(1)$ effort.
(We have not exploited this additional smoothness in the
proof in \ref{a:cons}.)

It is trivial to observe that both the ``near part" $(w^{near}, q^{near})$ and the ``correction part" $(w^{corr}, q^{corr})$ satisfy equations \eqref{eq:gperEqStokes}--\eqref{eq:gperEqMassConserv}, 
and that the discrepancy \eqref{eq:discrepCorr1}--\eqref{eq:discrepCorr2} of $(w^{corr}, q^{corr})$ exactly cancels that of $(w^{near}, q^{near})$. Consequently the sums $w=w^{near}+w^{corr}$ and $q=q^{near}+q^{corr}$ form a solution to the BVP \eqref{eq:gperEqStokes}--\eqref{eq:gperdiscrep4}. By taking $\f=(1,0)$ and $\f=(0,1)$, and
letting $G_{corr}=[w_{\y,(1,0)}^{corr}(\x),w_{\y,(0,1)}^{corr}(\x)]$, we obtain the useful decomposition:
\begin{equation} \label{eq:GperDecomp}
G_{per}(\x,\y)=G_{near}(\x,\y) + G_{corr}(\x,\y).
\end{equation}
The same decomposition holds for the pressure:
\begin{equation} \label{eq:GpperDecomp}
G_{per}^p(\x,\y)=G_{near}^p(\x,\y) + G_{corr}^p(\x,\y).
\end{equation}
This split is a useful tool for both the analysis and the numerical method, and will be revisited in later sections.

As in the free space case, we can define the periodic single layer potential induced by $\bsigma(\x)=(\sigma_1(\x),\sigma_2(\x))$ on a boundary $\Gamma$ as:
\begin{equation}
\u(\x)=\cs^{per}_{\Gamma}[\bsigma](\x)=\int_{\Gamma} G_{per}(\x,\y) \bsigma(\y)\,ds_{\y},\quad p(\x)=\cs^{per,p}_{\Gamma}[\bsigma]=\int_{\Gamma} G_{per}^p(\x,\y) \bsigma(\y)\,ds_{\y}.
\end{equation}

The associated traction is given by:
\begin{equation}
\T(\x)=\cs^{per,t}_{\Gamma}=\int_{\Gamma} G_{per}^t(\x,\y) \bsigma(\y)\,ds_{\y}.
\end{equation}

Equations \eqref{eq:GperDecomp} and \eqref{eq:GpperDecomp} motivate a similar decomposition for the single layer potential:
\begin{align}
\cs^{per}_{\Gamma}[\bsigma](\x) &= \cs^{near}_{\Gamma}[\bsigma](\x) +  \cs^{corr}_{\Gamma}[\bsigma](\x) \label{eq:uperdecomp} \\
\cs^{per,p}_{\Gamma}[\bsigma](\x) &= \cs^{near,p}_{\Gamma}[\bsigma](\x) +  \cs^{corr,p}_{\Gamma}[\bsigma](\x)\, , \label{eq:pperdecomp}
\end{align}
where $\cs^{near}_{\Gamma}[\bsigma](\x)$ and $\cs^{near,p}_{\Gamma}[\bsigma](\x)$ are the sums over the near
copies of the unit cell of $\cs_{\Gamma}[\bsigma](\x)$ and $\cs^{p}_{\Gamma}[\bsigma](\x)$ respectively.
And $(\cs^{corr}_{\Gamma}[\bsigma](\x), \cs^{corr,p}_{\Gamma}[\bsigma](\x))$ is a solution to the empty box BVP \eqref{eq:emptyBoxEq1}--\eqref{eq:emptyBoxDiscrep4}, with the discrepancy vector $\g=[\g_1;\g_2;\g_3;\g_4]$ given by:
\begin{align}
\g_1 &= -(\cs^{near}_{\Gamma,R}-\cs^{near}_{\Gamma,L}) \label{eq:SLPdiscrepCorr1} \\
\g_2 &= -(\cs^{near,t}_{\Gamma,R}-\cs^{near,t}_{\Gamma,L}) +\F/2|\e_2| \label{eq:SLPdiscrepCorr2} \\
\g_3 &= -(\cs^{near}_{\Gamma,U}-\cs^{near}_{\Gamma,D}) \label{eq:SLPdiscrepCorr3} \\
\g_4 &= -(\cs^{near,t}_{\Gamma,U}-\cs^{near,t}_{\Gamma,D}) +\F/2|\e_1| \label{eq:SLPdiscrepCorr4}\,,
\end{align}
where $\F=\int_{\Gamma} \bsigma\,ds$ is the net force.

\begin{rmk}\label{r:notper}
The ``periodic" single layer potential is doubly periodic in the
directions $\e_1$ and $\e_2$ if and only if $\F=\int_{\Gamma} \bsigma\,ds =\mbf{0}$. In the general case where
this condition is violated, the velocity field is doubly periodic while the traction has a constant jump
across the walls.
\end{rmk}

\begin{rmk}
The difference between the periodic single layer potential and its free space
counterpart is smooth in $\cU$. Thus the jump relation of the single layer potential as stated
in lemma \ref{lem:jumprelation} holds without any changes. In fact, the properties stated in 
lemma \ref{lem:netquantities} also generalize directly to the periodic case. To prove this statement, we require the following lemma.
\end{rmk}

\begin{lem}
Let $(\u^{corr},p^{corr})$ be the correction terms defined by \eqref{eq:uperdecomp} and \eqref{eq:pperdecomp}.
They do not contribute to the net force, net torque, or the net flux. More specifically, we have
\begin{align}
\int_{\Gamma} \T(\u^{corr},p^{corr}) \,ds &=0 \label{eq:NetForceCorr} \\
\int_{\Gamma} \left((\x-\x^c)^{\perp},\T(\u^{corr},p^{corr})\right) \,ds &=0 \label{eq:NetTorqueCorr} \\
\int_{\Gamma} \u^{corr}\cdot\n \,ds &=0 , \label{eq:NetFluxCorr}
\end{align}
where $\Gamma=\partial \Omega$, $\Omega\in\cU$ is any domain in the unit cell.
\end{lem}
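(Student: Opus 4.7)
The plan is to observe that $(\u^{corr}, p^{corr})$, as the solution of the empty BVP \eqref{eq:emptyBoxEq1}--\eqref{eq:emptyBoxDiscrep4} with the discrepancy data \eqref{eq:SLPdiscrepCorr1}--\eqref{eq:SLPdiscrepCorr4}, is by Theorem~\ref{t:EBVP}/\ref{t:consistency} a $C^\infty(\cU)$ field satisfying the homogeneous Stokes equations throughout the whole unit cell. In particular, since $\Omega \subset \cU$, the pair is smooth on $\overline{\Omega}$ with no source term anywhere inside $\Omega$. Every one of the three identities then follows by pulling the boundary integral inward with the divergence theorem and invoking incompressibility, momentum balance, or a symmetry cancellation.

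Concretely, for \eqref{eq:NetFluxCorr} I would simply apply the divergence theorem to $\u^{corr}$ on $\Omega$ and use $\nabla\cdot\u^{corr}=0$. For \eqref{eq:NetForceCorr}, I would write $\int_\Gamma T_i\,ds = \int_\Gamma \sigma_{ij}n_j\,ds = \int_\Omega \partial_j \sigma_{ij}\,dV$; the integrand equals $-\partial_i p^{corr} + \mu \Delta u^{corr}_i$, which vanishes by \eqref{eq:emptyBoxEq1}. For the torque \eqref{eq:NetTorqueCorr}, setting $\mbf{a}(\x) := (\x-\x^c)^\perp$, the divergence theorem gives
\begin{equation*}
\int_\Gamma a_i \sigma_{ij} n_j\, ds = \int_\Omega (\partial_j a_i)\sigma_{ij}\,dV + \int_\Omega a_i (\partial_j \sigma_{ij})\, dV.
\end{equation*}
The second integral vanishes as above. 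For the first, one computes $\partial_j a_i$ explicitly and observes it is the antisymmetric matrix $\bigl[\begin{smallmatrix}0&-1\\1&0\end{smallmatrix}\bigr]$, whose contraction with the symmetric stress tensor $\sigma_{ij}$ is pointwise zero.

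There is no real obstacle here: the entire content is that $(\u^{corr},p^{corr})$ is a regular, source-free Stokes flow inside $\Omega$, so the classical force/torque/flux balance identities for a bounded Stokes region with no internal sources apply verbatim. The only point deserving a remark is the appeal to the regularity and Stokes-equation property of $(\u^{corr},p^{corr})$ in $\cU$, which is exactly what Theorems~\ref{t:EBVP} and~\ref{t:consistency} deliver; once that is noted, each of the three lines collapses to at most one application of the divergence theorem.
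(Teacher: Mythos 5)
Your proof is correct, but it follows a genuinely different route from the paper's. You integrate over the \emph{interior} region $\Omega$, using the fact that the correction pair is, by construction and by Theorems~\ref{t:EBVP} and~\ref{t:consistency}, a smooth, source-free, divergence-free Stokes field throughout $\cU$, hence on $\overline{\Omega}$; each identity then falls to a single application of the divergence theorem, with the torque killed by contracting the antisymmetric matrix $\partial_j a_i$ (for $\mbf{a}=(\x-\x^c)^\perp$) against the symmetric stress (note that your identity $\partial_j\sigma_{ij}=-\partial_i p+\mu\Delta u_i$ quietly uses $\nabla\cdot\u^{corr}=0$ as well as \eqref{eq:emptyBoxEq1}, a trivial but worth-stating point). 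The paper instead works in the \emph{exterior} region $D=\cU\backslash\overline{\Omega}$: it uses the Stokes divergence theorem \eqref{eq:DivThmStokes} on $D$ for the force, Green's first identity \eqref{G1I} with $\v=(\x-\x^c)^\perp$ for the torque, and the divergence theorem plus periodicity of $\u^{corr}$ for the flux, discarding the wall contributions over $\partial\cU$ (which vanish thanks to periodicity of $\u^{corr}$ and the consistency of the wall traction data, cf.\ \eqref{gforce}--\eqref{gvol}). Your version buys simplicity: no wall integrals, hence no appeal to periodicity, consistency, or Green's identity. What it exploits in an essential way --- and the paper's exterior-style argument does not need --- is that the correction field has no singularity inside $\Omega$; the paper's argument is the one that also applies to the singular layer potentials themselves (Lemma~\ref{lem:netquantities}), which is presumably why that template is reused here. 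Both are valid; yours is the shorter proof of this particular lemma.
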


\begin{proof}
Let $D=\cU\backslash\overline{\Omega}$, and recall the divergence theorem for the Stokes equation:
\begin{equation} \label{eq:DivThmStokes}
\int_{D} (-\mu\Delta \u^{corr}+\nabla p^{corr})\,d\x = -\int_{\partial D} \T(\u^{corr},p^{corr})\,ds.
\end{equation}
Since $(\u^{corr},p^{corr})$ satisfies the Stokes equation in $D$, the left hand side vanishes. And the
right hand side can be expanded into:
\begin{align}
\int_{\partial D} \T(\u^{corr},p^{corr})\,ds &= \int_{\partial \cU} \T(\u^{corr},p^{corr})\,ds -\int_{\Gamma} \T(\u^{corr},p^{corr})\,ds \\
&= -\int_{\Gamma} \T(\u^{corr},p^{corr})\,ds, 
\end{align}
which implies \eqref{eq:NetForceCorr}.

The proof of \eqref{eq:NetTorqueCorr} relies on the Green's first identity for the Stokes equation:
\begin{equation} \label{G1I}
\int_{\cal K} (\mu \Delta u_i - \partial_i p) v_i
= - \frac{\mu}{2} \int_{\cal K} (\partial_i u_j + \partial_j u_i)(\partial_i v_j + \partial_j v_i) + \int_{\partial\cal K} T_i(\u,p) v_i ~,
\end{equation}
where ${\cal K}$ is an arbitary domain. By taking ${\cal K}=D$, $(\u,p)=(\u^{corr},p^{corr})$ and $\v=(\x-\x^c)^{\perp}$, we have:
\begin{align}
\partial_i v_j + \partial_j v_i &= 0,\, i,j=1,2 \\
\int_D (\mu \Delta u_i - \partial_i p) v_i &= 0 \\
\int_{\partial \cU} T_i(\u,p) v_i &= 0,
\end{align}
leading to $\int_{\Gamma} \left((\x-\x^c)^{\perp},\T(\u^{corr},p^{corr})\right) \,ds =0$.

\eqref{eq:NetFluxCorr} is a direct consequence of the divergence theorem and the periodicity of $\u^{corr}$:
\begin{equation}
\int_{\Gamma} \u^{corr}\cdot\n\,ds = \int_{\partial\cU} \u^{corr}\cdot\n\,ds = \int_{\cU} \nabla\cdot\u^{corr}\,d\x =0.
\end{equation}
\end{proof}

\section{Integral equation representation of the mobility BVP}
\label{s:bie}

The previous section enables us to seek a solution to the quasi-static BVP in the form of a periodic single layer
potential. 
We assume that on each boundary $\Gamma_i$, it is induced by an unknown density function $\bsigma_i(\x)$.
Here each density $\bsigma_i(\x)$ is a vector function $\bsigma_i(\x)=(\sigma_{i1}(\x),\sigma_{i2}(\x))$. 
We use the condensed notation
\begin{equation}
\bsigma(\x)=(\sigma_{11}(\x),\sigma_{12}(\x),\cdots,\sigma_{N_o1}(\x),\sigma_{N_o2}(\x)),
\end{equation}
and 
\begin{align}
\u(\x) &= \cs_{\Gamma}^{per}[\bsigma](\x)\defeq \sum_{i=1}^{N_o} \cs_{\Gamma_i}^{per}[\bsigma_i](\x) \label{eq:BIEurep} \\
p(\x) &= \cs_{\Gamma}^{per,p}[\bsigma](\x)\defeq \sum_{i=1}^{N_o} \cs_{\Gamma_i}^{per,p}[\bsigma_i](\x) \label{eq:BIEprep} \\
\T(\x) &= \cs_{\Gamma}^{per,t}[\bsigma](\x)\defeq \sum_{i=1}^{N_o} \cs_{\Gamma_i}^{per,t}[\bsigma_i](\x) \label{eq:BIEtrep} \,,
\end{align}

Since there is no inertial stress in a rigid body, the stress tensor must be identically zero inside $\Omega_i$.
This motivates us to reinterpret the boundary condition as
\begin{equation}
\T_{-}(\x)=(\sigma(\x)\cdot \n(\x))_{-} \equiv \mbf{0},\qquad (\x\in\Gamma_i)
\end{equation}

Recalling the jump relation \eqref{SLnormaljump} (the periodic case),
we obtain the following Fredholm integral equation of the second kind:
\begin{equation} \label{eq:BIEnoConstraint}
\left(\frac{1}{2}I+\cK\right)\bsigma(\x)=-\T(\u_0(\x),p_0(\x)),\qquad \x\in\Gamma,
\end{equation}
where 
\begin{equation}
\mathbf{I} =\left(\begin{array}{cccc}
\mathbf{I}_{1}\\
 & \mathbf{I}_{2}\\
 &  & \ddots\\
 &  &  & \mathbf{I}_{N_o}
\end{array}\right) 
\end{equation}
and
\begin{equation}
\mathcal{K} =\left(\begin{array}{cccc}
\mathcal{K}_{1,1} & \mathcal{K}_{1,2} & \ldots & \mathcal{K}_{1,N_o}\\
\mathcal{K}_{2,1} & \mathcal{K}_{2,2} & \ldots & K_{2,N_o}\\
\vdots & \vdots & \ddots & \vdots\\
\mathcal{K}_{N_o,1} & \mathcal{K}_{N_o,2} &  & \mathcal{K}_{N_o,N_o}
\end{array}\right).
\end{equation}

Here each $\cK_{ij}$ is the periodic single layer traction operator from the j-th boundary component to
the i-th boundary component, which is given by:
\begin{equation}
\cK_{ij}[\bsigma_j](\x)=\cs_{\Gamma_j}^{per,t}[\bsigma_j](\x)=\int_{\Gamma_j} G_{per}^t(\x,\y)\bsigma_j(\y)\,ds_{\y},
\qquad \x\in\Gamma_i,\, i\neq j,
\end{equation}
and 
\begin{equation}
\cK_{ii}[\bsigma_i](\x)=\cs_{\Gamma_i}^{per,t}[\bsigma_j](\x)=\oint_{\Gamma_i} G_{per}^t(\x,\y)\bsigma_i(\y)\,ds_{\y},
\qquad \x\in\Gamma_i\, .
\end{equation}

Equations \eqref{eq:SLGaussLaw1StokesForce}, \eqref{eq:SLGaussLaw1StokesTorque} and \eqref{eq:SLGaussLaw2StokesTorque}
(the periodic version) enable us to reformulate the force and torque constraints as:
\begin{align}
\int_{\Gamma_i} \bsigma_i(\x)\,ds_{\x} &= \mbf{0} \label{eq:BIEconstraint1}  \\
\int_{\Gamma_i} \left((\x-\x_i^c)^{\perp},\bsigma_i(\x)\right)\,ds_{\x} &= o. \label{eq:BIEconstraint2}
\end{align}

For notational convenience, we further introduce the operator $L_i$ defined on $\Gamma_i$ as:
\begin{equation}
L_i\bsigma_i(\x)=\int_{\Gamma_i}\bsigma_i(\y)\,ds_{\y}
+(\x-\x_i^c)^{\perp}\int_{\Gamma_i}\left((\y-\x_i^c)^{\perp},\bsigma_i(\y)\right) ds_{\y},
\end{equation}
and define $L$ as:
\begin{align}
\mathbf{L} & =\left(\begin{array}{cccc}
\mathbf{L}_{1}\\
 & \mathbf{L}_{2}\\
 &  & \ddots\\
 &  &  & \mathbf{L}_{N_o}
\end{array}\right)\,.
\end{align}

It is straightforward to verify that if $\bsigma(\x)$ satisfies equations \eqref{eq:BIEnoConstraint},
\eqref{eq:BIEconstraint1} and \eqref{eq:BIEconstraint2}, then $(\u,p)=\left(\cs_{\Gamma}^{per}[\bsigma], \cs_{\Gamma}^{per,p}[\bsigma]\right)$ solves the BVP \eqref{eq:StokesFlowEq}--\eqref{eq:PerBCPressure}.
In particular, \eqref{eq:BIEconstraint1} implies that the
total force $\sum_{i=1}^{N_o} \int_{\Gamma_i} \bsigma_i ds_{\x} = \mbf{0}$,
so that $\u$ generated by the generalized Green's function is actually
periodic.
Here the operator $(\frac{1}{2}I+\cK)$ has a $3N_o$-dimensional null space.
The purpose of the $3N_o$
constraints is to select the particular solution with zero net forces and torques on all bodies.
However this constrained BIE, when discretized, leads to a rectangular linear system that is less favorable
than a square one. Here, following \cite{karrilakim,rachhgreengard} we propose instead to solve the following integral equation:
\begin{equation} \label{eq:BIEPerturbed}
\left(\frac{1}{2}I+\cK+L\right)\bsigma(\x)=-\T(\u_0(\x),p_0(\x))\,,
\end{equation}
or componentwise:
\begin{align}
\frac{1}{2}{\bsigma}_{i}\left(\x\right) &+\sum_{j=1}^{N_o}
\cK_{i,j}\bsigma_{j}(\x)
+\int_{\Gamma_i}\bsigma_i (\y)ds_{\y} \nonumber \\
&+ \left(\x - \x_i^c \right)^{\perp}
\int_{\Gamma_i} \left( \left(\y - \x^c_i \right)^{\perp}, \bsigma_{i} (\y) \right) ds_{\y}
 = -T(\u_0(\x),p_0(\x))
\quad \x\in\Gamma_i\label{eq:BIEPerturbed2}
\end{align}

It is trivial to verify that equations \eqref{eq:BIEnoConstraint},
\eqref{eq:BIEconstraint1} and \eqref{eq:BIEconstraint2} imply \eqref{eq:BIEPerturbed}. 
\cite{rachhgreengard} has proved in the free space case that they are in fact equivalent. The same statement holds for the
periodic case. Here we state it as a lemma and leave the proof, which is similar to the free space case,
to the reader.

\begin{lem}
If $\bsigma$ solves \eqref{eq:BIEPerturbed}, then it solves \eqref{eq:BIEnoConstraint},
\eqref{eq:BIEconstraint1} and \eqref{eq:BIEconstraint2}.
\end{lem}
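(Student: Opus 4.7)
The plan is to start from a solution $\bsigma$ of \eqref{eq:BIEPerturbed}, form the associated periodic single-layer velocity/pressure pair $(\u,p) = (\cs^{per}_\Gamma[\bsigma],\cs^{per,p}_\Gamma[\bsigma])$, and use the interior Stokes structure inside each body to show that the ``extra'' operator $L$ actually annihilates $\bsigma$. Once $L\bsigma \equiv 0$ is established, \eqref{eq:BIEPerturbed} immediately collapses to \eqref{eq:BIEnoConstraint}, and the two scalar identities $L_i\bsigma_i \equiv 0$ encode exactly the constraints \eqref{eq:BIEconstraint1} and \eqref{eq:BIEconstraint2}.

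First I would invoke the interior jump relation of Lemma~\ref{lem:jumprelation}, valid also in the periodic case since $G_{per}-G$ is smooth, to write $\T(\u,p)_- = (\tfrac12 I + \cK)\bsigma$ on each $\Gamma_i$, so that \eqref{eq:BIEPerturbed} rearranges to $\T(\u+\u_0,p+p_0)_- = -L\bsigma$ there. Because $(\u,p)$ solves homogeneous Stokes inside each $\Omega_i$ (the near-image sources in $G_{near}$ never intersect $\Omega_i$, and $G_{corr}$ is smooth in $\cU$), the divergence theorem for the interior stress yields $\int_{\Gamma_i}\T(\u+\u_0,p+p_0)_-\,ds = \mbf{0}$. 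Abbreviating $\mbf{a} := \int_{\Gamma_i}\bsigma_i\,ds$ and $b := \int_{\Gamma_i}\bigl((\y-\x_i^c)^\perp,\bsigma_i\bigr)\,ds$, and using the centroid identity $\int_{\Gamma_i}(\x-\x_i^c)^\perp\,ds = \mbf{0}$, integrating $L_i\bsigma_i(\x) = \mbf{a} + (\x-\x_i^c)^\perp b$ over $\Gamma_i$ collapses to $|\Gamma_i|\,\mbf{a} = \mbf{0}$, which is \eqref{eq:BIEconstraint1}.

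Next I would test the interior traction against the rigid rotation $(\x-\x_i^c)^\perp$. Applying Green's first identity (as in the proof of Lemma~\ref{l:bvpunique}) to $(\u+\u_0,p+p_0)$ on $\Omega_i$ and using that $\nabla((\x-\x_i^c)^\perp)$ is antisymmetric, so contracts to zero with the symmetric stress tensor, yields $\int_{\Gamma_i}\bigl((\x-\x_i^c)^\perp, \T_-\bigr)\,ds = 0$. Substituting $\T_- = -L_i\bsigma_i$ and using the already proved $\mbf{a} = \mbf{0}$ reduces this to $b\int_{\Gamma_i}|\x-\x_i^c|^2\,ds = 0$; the weight is strictly positive, so $b = 0$, which is \eqref{eq:BIEconstraint2}. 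Together these force $L\bsigma \equiv 0$, and \eqref{eq:BIEPerturbed} collapses to \eqref{eq:BIEnoConstraint}.

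The main obstacle is really only a bookkeeping check: verifying that the interior jump relation, divergence theorem, and Green's first identity all transfer verbatim from the free-space to the periodic setting. This rests entirely on the decomposition $G_{per} = G_{near} + G_{corr}$ of section~\ref{subsec:gper}, where $G_{corr}$ is smooth throughout $\cU$ and the Stokeslet-type singularity of $G_{near}$ on each $\Gamma_i$ is identical to that of the free-space kernel. No new analysis beyond what was needed in the free-space argument of \cite{rachhgreengard} is expected.
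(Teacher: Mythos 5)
Your proof is correct and follows the same Karrila--Kim / Rachh--Greengard argument that the paper explicitly defers to ("the proof, which is similar to the free space case, [is left] to the reader"): apply the interior jump relation to rewrite \eqref{eq:BIEPerturbed} as $\T(\u+\u_0,p+p_0)_- = -L_i\bsigma_i$ on each $\Gamma_i$, then use that the interior net force and net torque of a Stokes solution vanish to kill the two moments, using the centroid identity $\int_{\Gamma_i}(\x-\x_i^c)^\perp\,ds=\mbf{0}$ and the positivity of $\int_{\Gamma_i}|\x-\x_i^c|^2\,ds$ to conclude $L_i\bsigma_i\equiv\mbf{0}$. The transfer to the periodic setting is indeed harmless, exactly because $G_{per}-G$ is a smooth Stokes pair in $\cU$ so the jump relation is unchanged and the correction part contributes zero force and torque by the lemma following Remark~\ref{r:notper}.
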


The next theorem establishes the existence and uniqueness of the boundary integral equation \eqref{eq:BIEPerturbed}, which further implies the existence of a solution to the quasi-static BVP.

\begin{theorem}
The boundary integral equation \eqref{eq:BIEPerturbed} has a unique solution.
\end{theorem}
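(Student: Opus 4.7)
The plan is to apply the Fredholm alternative. From the decomposition \eqref{eq:GperDecomp}--\eqref{eq:GpperDecomp}, the periodic traction kernel $G^t_{per}$ is the free-space $G^t$ plus a smooth correction, so $\cK$ is compact on $L^2(\Gamma)^2$ (the standard compactness of the free-space single-layer traction on smooth curves, up to a smooth perturbation). The completion $L$ is finite rank, hence also compact. Thus $\tfrac{1}{2}I+\cK+L$ is a compact perturbation of $\tfrac{1}{2}I$, and existence for every right-hand side follows from injectivity. The remaining task is to show that the null space is trivial.

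Assume $\bsigma=(\bsigma_1,\ldots,\bsigma_{N_o})$ lies in the null space. The first step is to prove that each net force $\F_i:=\int_{\Gamma_i}\bsigma_i\,ds$ and each net torque $\tau_i:=\int_{\Gamma_i}((\y-\x_i^c)^\perp,\bsigma_i(\y))\,ds$ vanishes. Integrating the homogeneous version of \eqref{eq:BIEPerturbed2} over $\Gamma_i$, the $L_i$ term contributes $|\Gamma_i|\F_i$, thanks to the centroid identity $\int_{\Gamma_i}(\x-\x_i^c)^\perp\,ds=\mbf{0}$; the cross-object terms $\cK_{ij}\bsigma_j$ ($j\neq i$) vanish because the Stokes stress $\sigma^j$ is divergence-free inside $\Omega_i$, so the divergence theorem kills its net flux through $\Gamma_i$; and the self term $\cK_{ii}\bsigma_i$ integrates to $-\tfrac{1}{2}\F_i$ by the periodic analog of Lemma \ref{lem:netquantities}. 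Collecting gives $|\Gamma_i|\F_i=\mbf{0}$, so $\F_i=\mbf{0}$. The condition $\tau_i=0$ follows by the same strategy applied to $(\x-\x_i^c)^\perp$, using the symmetry of the Cauchy stress (under which any Stokes flow carries zero net torque across a closed surface enclosing no sources).

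With $L\bsigma=\mbf{0}$ the equation collapses to $(\tfrac{1}{2}I+\cK)\bsigma=\mbf{0}$, i.e., $\T_-(\u,p)=\mbf{0}$ on each $\Gamma_i$, where $(\u,p):=(\cs^{per}_\Gamma[\bsigma],\cs^{per,p}_\Gamma[\bsigma])$. Since the total net force now vanishes, Remark \ref{r:notper} ensures $\u$ and $p$ are genuinely periodic. The interior Stokes problem in each $\Omega_i$ with zero traction forces $\u|_{\Omega_i}$ to be a rigid body motion and $p|_{\Omega_i}$ a constant $p_i^-$; moreover $\T_-=-p_i^-\n_i=\mbf{0}$ forces $p_i^-=0$. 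Continuity of the single-layer velocity across $\Gamma_i$ transfers the rigid body motion to the exterior trace, so the exterior pair $(\u,p)$ satisfies every hypothesis of Lemma \ref{l:bvpunique} (periodic Stokes with $\u_0=\mbf{0}$, rigid body motion on each $\Gamma_i$, vanishing force and torque since $\F_i=\tau_i=0$). Hence $\u\equiv\mbf{c}$ and $p\equiv p^+$ in the exterior, and continuity extends $\u\equiv\mbf{c}$ through every $\Omega_i$.

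Since $e(\u)\equiv\mbf{0}$ throughout $\cU$, both one-sided tractions reduce to $-p\n_i$, and the jump formula gives $\bsigma_i=\T_- -\T_+=p^+\n_i$, the same scalar multiple of the outward normal on every body. The main obstacle is ruling out $p^+\neq 0$: in the free-space analog decay at infinity kills $p^+$ immediately, but here no such decay is available and the one-dimensional pressure ambiguity inherent to $G^p_{per}$ (see section \ref{subsec:gper}) must be tracked explicitly. I would close the argument by checking that the candidate field $\cs^{per,p}[p^+\n]$ is required (by the previous step) to equal $0$ throughout every $\Omega_i$ and a single constant $p^+$ throughout the exterior, and then using the near-plus-correction split together with the Gauss identity for the Laplace double-layer (which pins the interior values of $\cs^{near,p}$) and the uniqueness of the empty-BVP solution modulo constants (Theorems \ref{t:EBVP}--\ref{t:consistency}) to conclude that this over-determined condition is consistent only when $p^+=0$. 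Thus $\bsigma\equiv\mbf{0}$, injectivity holds, and the Fredholm alternative delivers the claimed existence and uniqueness.
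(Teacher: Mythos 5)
Your route is the same one the paper gestures at---Fredholm alternative plus injectivity of $\tfrac12 I+\cK+L$---but the paper's proof is a single sentence \emph{asserting} injectivity, so you are in effect supplying the bulk of the argument that the paper omits. Everything up to your last paragraph is correct: integrating the homogeneous equation against $\mathbf{1}$ and against $(\x-\x_i^c)^\perp$ does kill $\F_i$ and $\tau_i$; the interior Stokes problem with $\T_-=\mbf{0}$ then forces a rigid-body velocity and zero interior pressure; Lemma~\ref{l:bvpunique} gives $\u\equiv\mbf{c}$ and a constant exterior pressure $p^+$; and the traction jump relation gives $\bsigma_i=p^+\n_i$.

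The gap you flag---ruling out $p^+\ne0$---is genuine, and in fact cannot be closed at the level of generality of the theorem as stated, because it depends on the additive pressure constant in the definition of $G^p_{per}$, which the paper never fixes. Concretely, the free-space identities $\cs_{\Gamma_i}[\n_i]\equiv\mbf{0}$ and $\cs^p_{\Gamma_i}[\n_i]=-1$ in $\Omega_i$, $0$ outside (the Laplace double-layer Gauss identity you invoke) give $\cs^{near}_\Gamma[\n]\equiv\mbf{0}$ and $\cs^{near,p}_\Gamma[\n]=-1$ in $\cup_i\Omega_i$, $0$ elsewhere. Since the near velocity and wall traction then vanish and $\F=\int_\Gamma\n\,ds=\mbf{0}$, the empty-box discrepancy is zero, so the correction pair is some nullspace representative $(\mbf{c},c)$. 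Hence the interior traction of $\cs^{per}_\Gamma[\n]$ is $(1-c)\n$, i.e.\ $\bigl(\tfrac12 I+\cK+L\bigr)[\n]=(1-c)\n$, which vanishes precisely when $c=1$. Thus injectivity holds if and only if the chosen normalization of $G^p_{per}$ has $c\ne1$. The paper's later remark on ``pressure correction for GMRES stability''---replacing $\T$ by $\T_\alpha=\T-(\T(\x_1)\cdot\n_{\x_1})\n$---is exactly the normalization that guarantees this, but it appears only in the numerics section, not in the theorem or its proof. So your diagnosis is right, and your proof is no less complete than the paper's; a fully rigorous statement would need to build the normalization into the definition of $\cK$, after which your argument closes.
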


\begin{proof}
  The operator $\frac{1}{2}I+\cK+L$ is injective. By the Fredholm alternative,
  it is bijective. 
\end{proof}

\begin{theorem}
There exists a solution to the quasi static BVP 
\eqref{eq:StokesFlowEq}--\eqref{eq:PerBCPressure}.
\label{t:bvp}
\end{theorem}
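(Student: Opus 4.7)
The plan is to construct a solution of the BVP from the unique solution of the boundary integral equation just established. Let $\bsigma$ denote the unique solution of \eqref{eq:BIEPerturbed}, whose existence is guaranteed by the previous theorem. By the preceding lemma, $\bsigma$ also satisfies \eqref{eq:BIEnoConstraint}, \eqref{eq:BIEconstraint1}, and \eqref{eq:BIEconstraint2}. Set
\[
\u(\x) \defeq \cs_{\Gamma}^{per}[\bsigma](\x), \qquad p(\x) \defeq \cs_{\Gamma}^{per,p}[\bsigma](\x), \qquad \x \in \RR^2 \setminus \Gamma.
\]
What remains is to exhibit rigid body velocities $\{(\v_i, \omega_i)\}_{i=1}^{N_o}$ such that the triple $(\u, p, \{(\v_i,\omega_i)\})$ satisfies all of \eqref{eq:StokesFlowEq}--\eqref{eq:PerBCPressure}.

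Once these definitions are in place, most of the BVP conditions follow quickly from the preparations of the previous sections. The PDEs \eqref{eq:StokesFlowEq} and \eqref{eq:MassConservation} hold because $G_{per}(\x,\y)$ solves the homogeneous Stokes system in $\x$ for $\x \ne \y$. The force condition \eqref{eq:ForceCondition} is obtained by invoking the periodic version of \eqref{eq:SLGaussLaw1StokesForce} together with \eqref{eq:BIEconstraint1}, using that $\T(\u_0, p_0)$ contributes no net force on a closed curve; \eqref{eq:TorqueCondition} follows analogously from \eqref{eq:SLGaussLaw1StokesTorque} and \eqref{eq:BIEconstraint2}. The periodicity statements \eqref{eq:PerBCVelocity}--\eqref{eq:PerBCPressure} are recovered via Remark~\ref{r:notper}: since \eqref{eq:BIEconstraint1} yields $\sum_i \int_{\Gamma_i} \bsigma_i\,ds = \mbf{0}$, the generalized periodic kernel produces a genuinely doubly-periodic field rather than one with a constant traction jump across the walls.

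The rigid body motion condition \eqref{eq:RigidBodyMotion} is the most delicate step, and I expect it to be the main obstacle. My approach is the ``interior trick'' underlying the Karrila--Kim representation: extend $\u$ and $p$ into each body interior $\Omega_i$ by the same layer-potential formula. Since $\bsigma$ satisfies \eqref{eq:BIEnoConstraint}, the interior jump relation from Lemma~\ref{lem:jumprelation} gives
\[
\T_{-}(\u, p)(\x) = \left(\tfrac{1}{2}I + \cK\right)\bsigma(\x) = -\T(\u_0, p_0)(\x), \qquad \x \in \Gamma_i,
\]
so the total interior traction $\T_{-}(\u + \u_0, p + p_0)$ vanishes on $\Gamma_i$. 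Now $\u + \u_0$ is a Stokes flow on the bounded domain $\Omega_i$ (the background shear $\u_0$ being a Stokes flow everywhere with pressure zero). Applying Green's first identity \eqref{G1I} on $\Omega_i$ to $\u + \u_0$, in the same manner as in the proof of Lemma~\ref{l:bvpunique}, the boundary integral collapses to zero, leaving
\[
\int_{\Omega_i} \langle e(\u+\u_0), e(\u+\u_0)\rangle \, dV = 0.
\]
Hence $e(\u+\u_0) \equiv \mbf{0}$ in $\Omega_i$, which forces $\u+\u_0$ to equal some rigid body motion $\v_i + \omega_i(\x - \x_i^c)^\perp$ throughout $\Omega_i$; \emph{define} $(\v_i, \omega_i)$ to be this pair. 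Continuity of the single-layer velocity across $\Gamma_i$ (the jump relation \eqref{SLnormaljump} carries only a traction jump, not a velocity jump), combined with continuity of $\u_0$ and $G_{corr}$, transfers the identity from the interior trace to the exterior trace on $\Gamma_i$, yielding \eqref{eq:RigidBodyMotion} and completing the construction.
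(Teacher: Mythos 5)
Your proposal is correct and follows essentially the same route as the paper: take the unique density $\bsigma$ solving \eqref{eq:BIEPerturbed}, form the periodic single-layer velocity and pressure, and verify the BVP conditions using the constraints \eqref{eq:BIEconstraint1}--\eqref{eq:BIEconstraint2}, the periodic jump relations, and Remark~\ref{r:notper}. The paper states this verification as ``straightforward'' and omits the details, so your interior Karrila--Kim energy argument establishing the rigid-body condition \eqref{eq:RigidBodyMotion} is a correct filling-in of exactly the step the paper leaves implicit.
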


\begin{proof}
Let $\bsigma$ be a solution to the BIE \eqref{eq:BIEPerturbed}. $\u(\x) = \cs_{\Gamma}^{per}[\bsigma](\x)$,
$p(\x) = \cs_{\Gamma}^{per,p}[\bsigma](\x)$ form a solution to the BVP.
\end{proof}

When combined with Lemma~\ref{l:bvpunique},
this completely characterizes existence and uniqueness for the
quasi-static BVP.

\section{Numerical solution of the BIE}
\label{sec:numerical}

In this section, we discuss the numerical solution of the BIE \eqref{eq:BIEPerturbed2}.
We assume that on each boundary component $\Gamma_i$, we are given a set of quadrature nodes $\{\x_i^{(k)}\}_{k=1}^{N_i}$ and weights $\{w_i^{(k)}\}_{k=1}^{N_i}$ such that
\begin{equation*}
\int_{\Gamma_i} f(\x)\,ds_{\x} \approx \sum_{k=1}^{N_i} f(\x_i^{(k)})w_i^{(k)}
\end{equation*}
holds to high accuracy for smooth functions $f$. For example, if the boundary $\Gamma_i$ is parametrized by a
$2\pi$-periodic function $\x_i(t),\,0\leq t\leq2\pi$, then the periodic trapezoidal rule in $\t$ gives
$\x_i^{(k)}=\x_i(2\pi k/N_i)$, and $w_i^{(k)}=(2\pi/N_i)||\x_i^{'}(2\pi k/N_i)||$.

Since the operators $\cK$ and $L$ appearing in the equation are both smooth, we apply the standard Nystr\"om
discretization to the BIE, i.e. enforce \eqref{eq:BIEPerturbed2} at the same quadrature nodes
$\{\x_i^{(k)}: k=1,\cdots,N_i,\, i=1,\cdots,N_o \}$ to get:
\begin{align} 
\frac{1}{2}\bsigma_i^k &+\sum_{j=1}^{N_o} \sum_{l=1}^{N_j} G_{per}^t (\x_k^{(i)},\x_l^{(j)})\bsigma_j^l\cdot w_l^{(j)}
+\sum_{m=1}^{N_i} \bsigma_i^m \cdot w_m^{(i)} \nonumber \\
&+(\x_k^{(i)}-\x_i^c)^{\perp} \cdot
\sum_{m=1}^{N_i} \left((\x_m^{(i)}-\x_i^c)^{\perp},\bsigma_i^m\right)\cdot w_m^{(i)} \nonumber \\
&= \T(\u_0(\x_k^{(i)},p_0(\x_k^{(i)}))),\qquad 
(k=1,\cdots,N_i,\, i=1,\cdots,N_o)\,, \label{eq:BIEdiscretized}
\end{align}
where $\bsigma_i^k = (\sigma_{i1}^k, \sigma_{i2}^k)\defeq \bsigma_i(\x_i^{(k)})=(\sigma_{i1}(\x_i^{(k)}),\sigma_{i2}(\x_i^{(k)}))$.

To complete the numerical method, several problems remain to be addressed. We discuss them in details in the
following subsections.

\subsection{Numerical methods for the empty box BVP}
\label{s:empty}

As is mentioned in previous sections, numerical solution of the empty box BVP \eqref{eq:emptyBoxEq1}--\eqref{eq:emptyBoxDiscrep4} is a necessary correction term in the
evaluation of the periodic Green's function. There exist an abundance of methods for this problem. We find it simple, efficient, and accurate to
solve it by the method of fundamental solutions (MFS).

Letting $\phi_j(\x)\defeq G(\x,\y_j)$ and $ \phi_j^p(\x)\defeq G^p(\x,\y_j)$, where $G(\x,\y)$ and $G^p(\x,\y)$
are the free space Green's functions for velocity and pressure, and $\{\y_j\}_{j=1}^M$ are a collection of points
lying on the walls of the expanded unit cell $\tilde{\cU}$, We approximate the solution in $\cU$ by a linear
combination of such functions. That is, for $\x\in\cU$,
\begin{align}
  \v(\x) & \approx \sum_{j=1}^M \phi_j(\x)\bxi_j
\label{vrep}
  \\
  q(\x) & \approx \sum_{j=1}^M \phi_j^p(\x)\cdot \bxi_j \,,
  \label{qrep}
\end{align}
where $\{\bxi_j\}_{j=1}^M$ are unknown vectors in $\RR^2$. Since each basis function is a solution to the Stokes
equation \eqref{eq:emptyBoxEq1}--\eqref{eq:emptyBoxEq2}, it remains only to enforce the boundary conditions
\eqref{eq:emptyBoxDiscrep1}--\eqref{eq:emptyBoxDiscrep4}.

To enforce the boundary conditions, we let $\{\x_{iL}\}_{j=1}^m \subset L$ and $\{\x_{iD}\}_{j=1}^m \subset D$ be two sets of $m$ collocation points
(Gauss Legendre nodes, for example) on the left and bottom walls respectively.
Enforcing condition \eqref{eq:emptyBoxDiscrep1} on the left wall gives
\begin{equation}
\sum_{j=1}^M \left[\phi_j(\x_{iL}+\e_1)- \phi_j(\x_{iL})\right] \bxi_j = \g_1(\x_{iL}), \,\, i=1,\cdots,m.
\end{equation}
We have $2M$ unknowns in the coefficient vector $\xi\defeq \{\bxi_j\}_{j=1}^M$, and $2m$ equations with right
hand side $d_1 \defeq \{\g_1(\x_{iL})\}_{i=1}^m$. We order the vectors with all the 1-components followed
by all the 2-components.  This leads to a linear system $Q_1\xi=d_1$, where $Q_1=[Q_1^{11}, Q_1^{12}; Q_1^{21}, Q_1^{22}]$, with each block having entries $\left(Q_1^{kl}\right)_{ij}=\phi_j^{kl}(\x_{iL}+\e_1)- \phi_j^{kl}(\x_{iL})$. 

Repeating this bookkeeping routine for the boundary conditions \eqref{eq:emptyBoxDiscrep2}--\eqref{eq:emptyBoxDiscrep4}, we get the linear system 
\begin{equation}
Q\xi = d, 
\end{equation}
where $Q=[Q_1;Q_2;Q_3;Q_4]$ and $d=[d_1;d_2;d_3;d_4]$, and each block corresponds to one of the boundary
conditions. This linear system (which is generally rectangular and ill conditioned) is then solved in
the least square sense. A convergence test is given in Fig.~\ref{f:ebvp_conv}.

\begin{rmk}
The method we presented here is similar to that in \cite{ahb}. An improvement is that we now choose the walls
of the expanded unit cell as the ``proxy surface", which is generic and works more robustly for skewed 
unit cells with a wide range of aspect ratios.
\end{rmk}

The system matrix $Q$ inherits the consistency conditions \eqref{gforce} and \eqref{gvol}, which in terms of linear algebra, can be stated
as
\begin{equation}
W^{T}Q \approx \bm{0}_{3\times 2M},
\end{equation}
where
\begin{equation}
W^T = \left[\begin{array}{llllllll}
0 & 0 & \w_L^T & 0 & 0 & 0 & \w_D^T & 0\\
0 & 0 & 0 & \w_L^T & 0 & 0 & 0 & \w_D^T \\
\w_L^T\e_2^1 & \w_L^T\e_2^2 & 0 & 0 & \w_D^T\e_1^1 & \w_D^T\e_1^2 & 0 & 0
\end{array}\right]
\; \in \; \RR^{3\times 8m}
~.
\label{W}
\end{equation}

{\bf Example 1.} As a verification of the method, we solve the EBVP with the discrepancy vector $\g$ generated
by a known solution, in unit cells with different aspect ratios. More specifically, we fix $\mu=0.7$, 
$\y_0=1.5(\cos{0.1}, \sin{0.1})$, and $\f_0=(0.3,-0.6)$, and let $\v(\x)=G(\x,\y_0)\f_0$,
and $q(\x)=G^p(\x,\y_0)\f_0$. In the first example, we choose the unit cell to be the unit
square centered at the origin with $\e_1=(1,0)$ and $\e_2=(0,1)$. In the second example, we change $\e_2$ to be
$(\cos{\frac{\pi}{4}}, \sin{\frac{\pi}{4}})$, while keeping the center and $\e_1$ unchanged.

In both cases, $\g$ is genearted by evaluating $\v(\x)$ and $\T(\v(\x),q(\x))$ on the walls, and the resulting
EBVP is solved by the MFS. The solution is computed on a $100\times 100$ grid in the unit cell and the error
is obtained by comparing the computed solution to the exact one. The error at the first grid point is subtracted
to account for the fact that the solution is unique only up to a constant.

The results are given in Fig \ref{f:ebvp_conv} which shows the convergence in the
number $m$ of the collocation points on each wall, and in the number $M$ of proxy points. In both cases, we
observe an exponential convergence in both $m$ and $M$. The numbers required to achieve a given accuracy are
relatively insensitive to the aspect ratio. 

\bfi  
\ig{width=1.95in}{ebvp_conv_geom.eps}
\ig{width=1.95in}{m_colloc_conv.eps}
\ig{width=1.95in}{M_proxy_conv.eps}
\ca{Convergence study of the empty box BVP. (Left) shows the collocation points on the walls and the proxy points
(on the expanded unit cell). (Middle) shows the relative $L^{\infty}$ error vs the number of collocation points.  (Right) shows the relative $L^{\infty}$ error vs the number of proxy points.}{f:ebvp_conv}
\efi

\subsection{Evaluation of the periodic Green's function}
\label{s:geval}
  
In this subsection, we address the remaining issue of the evaluation of sums of the following forms at all targets $i=1,\dots,N$, in the case of $N$ large.
\begin{align}
S_{per}(\x_i) &= \sum_{j=1}^N G_{per}(\x_i,\y_j)\f_j \\
S_{per}^p(\x_i) &= \sum_{j=1}^N G_{per}^p(\x_i,\y_j)\cdot \f_j \\
S_{per}^t(\x_i) &= \sum_{j=1}^N G_{per}^t(\x_i,\y_j)\f_j \,.
\end{align}
Here $G_{per}(\x,\y)$ is the generalized perodic Green's function defined in section \ref{subsec:gper}, and
$G_{per}^p(\x,\y)$ and $G_{per}^t(\x,\y)$ are the associated pressure and traction kernels.
$\{\y_j\}_{j=1}^N$ are a given set of point sources lying in the unit cell $\cU$, with given strength $\{\f_j\}_{j=1}^N$, and $\{\x_i\}_{i=1}^{N_t}$ are a given set of target points, also in the unit cell $\cU$.
We now summarize the practical use of the 
decompositions \eqref{eq:GperDecomp} and \eqref{eq:GpperDecomp}.
There are four main steps:
\begin{enumerate} 
\item Compute the ``near part" of the sums at all targets,
\begin{align*}
S_{near}(\x_i) &\defeq \sum_{j=1}^N G_{near}(\x_i,\y_j)\f_j \\
S_{near}^p(\x_i) &\defeq \sum_{j=1}^N G_{near}^p(\x_i,\y_j)\f_j \\
S_{near}^t(\x_i) &\defeq \sum_{j=1}^N G_{near}^t(\x_i,\y_j)\f_j \,.
\end{align*}
This is most efficiently done using a fast algorithm such as a Stokes FMM.
\item Evaluate the discrepancy of the ``near part" sums $S_{near}(\x)$ and $S_{near}^t(\x)$ at the walls, namely:
$\g=[\g_1;\g_2;\g_3;\g_4]$, where
\begin{align*}
\g_1 &= S_{near,R}-S_{near,L} \\
\g_2 &= S_{near,R}^t-S_{near,L}^t  \\
\g_3 &= S_{near,U}-S_{near,D} \\
\g_4 &= S_{near,U}^t-S_{near,D}^t \,.
\end{align*}
Notice that cancellation may be exploited here, so that all evaluations
are distant even when sources approach or lie on the walls.
\item
Use the MFS to solve the empty box BVP with the modified discrepancy $\tilde{\g}=\g_0-\g$, where
$\g_0=[\bm{0}; \F/2|\e_2|; \bm{0}; \F/2|\e_1|]$,
for total force $\F=\sum_{j=1}^N \f_j$, to get the periodizing
coefficients $\{\xi_j\}_{j=1}^M$.
\item
Recover the periodic sums by evaluating the correction pair $(\v,q)$ at the targets via sums \eqref{vrep}--\eqref{qrep}, then adding this to the near results:
\begin{align}
S(\x_i) &= S_{near}(\x_i) + \v(\x_i) \\
S^p(\x_i) &= S_{near}^p(\x_i) + q(\x_i) \\
S^t(\x_i) &= S_{near}^t(\x_i) + \T(\v,q)(\x_i) \label{Stsplit}
\end{align}
\end{enumerate}

\begin{rmk}
It is a direct consequence of Theorem~\ref{t:consistency} that the discrepancy $\tilde{\g}$ obtained from the above procedure automatically satisfies the consistency conditions for the empty box BVP. 
In the language of linear algebra, letting $\d=\{\g(\y_j)\}_{j=1}^M$ and $\tilde{\d}=\{\tilde{\g}(\y_j)\}_{j=1}^M$ be the discretization of $\g$ and $\tilde{\g}$ respectively, we can write $\tilde{\d}=P\d$, where
$P$ is the projection from $\d$ to $\tilde{\d}$.
The statement is that $W^{T}\tilde{\d}\approx 0$ holds true up to the discretization error, i.e. the vector $\tilde{\d}$ always lies in the range of $Q$, even though $\d$ in general does not. 
\end{rmk}

\begin{rmk}[Complexity]
  Since the correction pair $(\v,q)$ is
  smooth inside the unit cell $\cU$, only $M=O(1)$ basis functions are needed, independent of $N$.
  As a result, the linear system solve costs $O(1)$, while
  direct evaluation of the discrepancy and evaluation of the correction
  at the targets both cost $O(N)$.
  The FMM evaluates the ``near part" in $O(N)$ time, so that the total cost remains $O(N)$.
  In practice the total time is dominated by the near-sum FMM from $4N$ sources
  to $N$ targets.
\end{rmk}

\begin{rmk}[Pressure correction for GMRES stability]
Armed with the machinery described in previous sections, we can now apply iterative methods like GMRES to
solve the linear system \eqref{eq:BIEdiscretized}, with one caveat: due to the fact that the generalized
Green's function for the traction $G_{per}^t(\x,\y)$ is determined only up to a constant $c\n_{\x}$, 
a direct implementation of this algorithms results in a stagnation of the GMRES. 

Fortunately this problem
can be fixed easily. One approach is to introduce $\T_{\alpha}(\x) = \T(\x)+\alpha\n_{\x}$, 
where $\T(\x)=\cK[\bsigma]$ is the traction of the periodic single layer potential with density $\bsigma$,
using the notations introduced in earlier sections. Letting $\x_1\in \Gamma_j$ be a fixed point on the
boundary $\Gamma_j$, we pick $\alpha$ so that $\T_{\alpha}(\x_1)\cdot\n_{\x_1}=0$, which implies
$\alpha=-\T(\x_1)\cdot\n_{\x_1}$. We then replace $\T(\x)=\cK[\bsigma]$ in the equation \eqref{eq:BIEPerturbed}
by the modified $\T_{\alpha}(\x)=\T(\x)-\T(\x_1)\cdot\n_{\x_1}$. The rest of the algorithm remains
unchanged. This simple correction successfully stops the GMRES from stagnating.
\end{rmk}

\subsection{Special quadrature for evaluation close to the boundary}
\label{s:close}

When simulating dense suspension of particles in viscous flows, we are often faced with the situations where
particles approach very close to each other. In order to use integral equation methods, we need quadrature
rules for the accurate evaluation of integrals in the form of \eqref{eq:BIEurep}, \eqref{eq:BIEprep}, and \eqref{eq:BIEtrep}, where the target $\x$ can be arbitarily close to the boundary $\Gamma$.

It is well known that a fixed smooth rule leads to the error of $O(1)$ as $\x$ approaches $\Gamma$. 
Fortunately special quadrature rules have been developed for the evaluation of layer potentials with
close to surface targets. Here we recommend the
exterior single-layer rule in \cite{closeglobal}, which builds upon
the ``globally compensated" rule of Helsing--Ojala \cite{helsing_close}.
Both exploit barycentric-type quadratures for Cauchy integrals and their derivatives.

The Stokes single layer potential evaluator is obtained by expressing the single layer potential in terms
of Laplace layer potentials.
\begin{equation}
S[\bsigma](\x) = \frac{1}{2}S_L[\bsigma] + \frac{1}{2}\nabla S_L[\tilde{\sigma}] - \frac{1}{2} x_1\nabla S_L[\sigma_1] -\frac{1}{2} x_2\nabla S_L[\sigma_2]\,,
\end{equation}
where $\bsigma=(\sigma_1,\sigma_2)$, $\tilde{\sigma}(\x)=\x\cdot\bsigma(\x)$.

In our formulation, we also need to evaluate the traction of the Stokes single layer potential, for which
we introduce the following decomposition to complete the story.
\begin{equation}
\begin{split}
S^t[\bsigma](\x) &= x_1\mathcal{H}_x(\mathcal{S}_L(\sigma_1))\cdot \n_x + x_2\mathcal{H}_x(\mathcal{S}_L(\sigma_2))\cdot \n_x \\
&-\mathcal{H}_x(\mathcal{S}_L(\tilde{\sigma}))\cdot \n_x - (\partial_{x_1}\mathcal{S}[\sigma_1]+ \partial_{x_2}\mathcal{S}[\sigma_2])\cdot \n_x, 
\end{split}
\end{equation}
where $\bsigma=(\sigma_1,\sigma_2)$, $\tilde{\sigma}(\x)=\x\cdot\bsigma(\x)$, and $\mathcal{H}_x$ is the Hessian matrix with respect to $\x$.

\begin{rmk} \label{rmk:precomp}
To further reduce the computational cost, we use the special quadrature rule in the following fashion.
We divide the solver into two phases: 
a precomputation phase where a sparse matrix is formed, corresponding to the correction of the quadrature for points that are within $10h$ distance from a boundary, followed by a solution phase,
where GMRES is called to solve the linear system, where in each iteraction an FMM is called and the sparse matrix
is applied.
\end{rmk}

\subsection{Efficient evaluation of the effective viscosity}
\label{s:mueff}

It is important in applications to extract the effective
(homogenized) viscosity, given by \eqref{mueff}
in the case that no particle intersects $D$.
Since physically this formula integrates the horizontal force
transmitted per unit cell, and every object $\Omega_k$ or region of fluid
is in static equilibrium (Reynolds number is zero),
in fact {\em any} curve $\cal C$ may be used
which connects a point on $L$ to its corresponding
periodic image on $R$ yet does not touch any objects.
That is, $\int_D$ in \eqref{mueff} may be replaced by $\int_{\cal C}$;
the proof is via Gauss' Law \eqref{eq:SLGaussLaw1StokesForce}.
Thus there is invariance to ``deformation'' of the integration contour,
as in complex analysis.

Since in a large scale simulation with many particles it is common for
at least one particle to be intersecting $B$, integration on $D$ cannot be used.
However, in this case of many particles, finding a $\cal C$ as above,
and choosing a numerical
quadrature scheme sufficiently resolved to handle the
rapid traction changes as $\cal C$ passes between nearly touching objects,
would be cumbersome
Thus, in the spirit of \cite[Sec.~2.6]{ahb},
we propose a much more efficient and (we believe) elegant
contour deformation method that exploits
the split of the periodic BIE representation into near sum and correction parts.
This method only involves {\em distant} interactions,
thus allows $O(1)$ quadrature nodes independent of the complexity of the
geometry.

We now present a method to evaluate the force integral
$\int_{\cal C} \T(\u,p) ds_\x$, where $\cal C$ is a deformation of $D$,
needed for \eqref{mueff}.
Without loss of generality, we assume that the unit cell $\cU$ is centered at the origin.
We consider $K=1$ (a single object in the unit cell); the generalization
to $K>1$ is straightforward.
If this object $\Omega$
intersects the wall $D$, we assume that $\cal C$ has been
deformed to pass {\em below} $\Omega$, and that its endpoints are the
same as those of $D$.
We split the boundary of $\Omega$ as $\Gamma = \cup_{i=1}^4 \Gamma_i$,
where $\Gamma_i$ is the part of $\Gamma$
that lies in the $i$th ``quadrant'' (in the sense of the skew unit cell).
See Fig \ref{f:deform_contour}.
Define the (possibly disconnected) curve
$\tilde{\Gamma}= \cup_{i=1}^4 \tilde{\Gamma}_i$, 
where $\tilde{\Gamma}_1=\Gamma_1-\e_1-\e_2$, $\tilde{\Gamma}_2=\Gamma_1-\e_2$, 
$\tilde{\Gamma}_3=\Gamma_3$, and $\tilde{\Gamma}_4=\Gamma_1-\e_1$.

\bfi  
\ig{width=1.95in}{mueff_diagram1.eps}
\ig{width=1.95in}{mueff_diagram2.eps}
\ig{width=1.95in}{mueff_diagram3.eps}
\ca{Illustration of the deformation of contour technique for the evaluation of effective viscosity. (a) shows the unit cell with one object that intersects the bottom wall. The integration on the bottom wall can be replaced by that on the curve $\cal C$. (b) shows the shifted pieces of the boundary, and the shifted copies of $\cal C$. Gauss' Law is applied on the closed contour in bold blue, which encloses $\tilde{\Gamma}_1$ and $\tilde{\Gamma}_2$. (c) Cancelling the terms involving shifted copies of $\cal C$, we obtain an expression that involves sources on the shifted pieces of the boundary (in bold red) and targets on the shifted walls (in bold blue) only. The sources and targets are well separated.}{f:deform_contour} 
\efi

\begin{pro}
  Let $\T=S_{\Gamma}^{per,t}[\bsigma]=S_{\Gamma}^{near,t}[\bsigma]+ \T(\v,q)$
  be the traction on $\cal C$
  of a periodic single layer potential that is decomposed into a near direct image sum plus a correction part given by velocity field $\v$ and pressure $q$,
  as in \eqref{Stsplit}.
  Define $\tilde{\bsigma}$ on $\tilde{\Gamma}$ in the natural way so
  that $\tilde{\bsigma}|_{\tilde{\Gamma_i}}=\bsigma|_{\Gamma_i}$.
  Then,
\begin{equation} \label{eq:contourdef}
\begin{split}
\int_{\cal C}\T\,ds_{\x} =& \int_{R-\e_2} S_{\tilde{\Gamma}}^t[\tilde{\bsigma}] ds_{\x} -\int_{L-\e_1-\e_2} S_{\tilde{\Gamma}}^t[\tilde{\bsigma}] ds_{\x} + 2\int_{D-\e_1-\e_2} S_{\tilde{\Gamma}}^t[\tilde{\bsigma}] ds_{\x} \\
& +2\int_{D-\e_2} S_{\tilde{\Gamma}}^t[\tilde{\bsigma}] ds_{\x} 
-\int_{\Gamma_1\cup\Gamma_2} \bsigma ds_{\x} + \int_{D} \T(\v,q)
\end{split}
\end{equation}
\end{pro}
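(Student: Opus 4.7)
The plan is to split the integrand via \eqref{Stsplit} into the near-image traction $S_\Gamma^{near,t}[\bsigma]$ and the correction traction $\T(\v,q)$, then deform each piece separately using variants of Gauss' Law \eqref{eq:SLGaussLaw1StokesForce} applied on appropriately chosen closed contours.

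For the correction piece, my first step is to show that $\int_{\cal C} \T(\v,q)\, ds_\x = \int_D \T(\v,q)\, ds_\x$, which yields the last term of \eqref{eq:contourdef}. This follows from applying the divergence theorem to $(\v,q)$ on the object-free strip bounded above by $\cal C$ and below by $D$: the pair is a smooth Stokes flow in the interior, so the net traction on the closed boundary of the strip vanishes, and the contributions on the left and right wall segments closing the strip cancel by the $\e_1$-periodicity of $(\v,q)$ guaranteed by \eqref{eq:emptyBoxDiscrep1}.

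Next, for the near-image piece, the key idea is that by translation invariance of the free-space Stokeslet, $G^t(\x, \y + m\e_1 + n\e_2) = G^t(\x - m\e_1 - n\e_2, \y)$, I can re-index $\int_{\cal C} S_\Gamma^{near,t}[\bsigma]\, ds_\x$ so that the sources live on the four shifted pieces $\tilde{\Gamma}_i$ clustered near a single corner of $\cU$, while the target curves become the shifted copies $\cal C$, $\cal C - \e_1$, $\cal C - \e_2$, and $\cal C - \e_1 - \e_2$. Thus the near-image contribution becomes a sum of free-space single-layer traction integrals of $S_{\tilde{\Gamma}}^t[\tilde{\bsigma}]$ along these four contours, with $\tilde{\bsigma}$ being the pullback of $\bsigma$ by translation (so arclength integrals are preserved).

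The central step is to apply Gauss' Law \eqref{eq:SLGaussLaw1StokesForce} on the closed contour shown in bold blue in Fig.~\ref{f:deform_contour}(b), assembled from arcs of the shifted walls $R-\e_2$, $L-\e_1-\e_2$, $D-\e_2$, $D-\e_1-\e_2$, together with pieces of the four shifted copies of $\cal C$, and enclosing exactly $\tilde{\Gamma}_1 \cup \tilde{\Gamma}_2$. Gauss' Law supplies $-\int_{\tilde{\Gamma}_1 \cup \tilde{\Gamma}_2} \tilde{\bsigma}\, ds = -\int_{\Gamma_1 \cup \Gamma_2} \bsigma\, ds$, matching the penultimate term of \eqref{eq:contourdef}. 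Rearranging and cancelling the shifted copies of $\cal C$, which appear with opposite orientations as boundaries of neighbouring sub-regions, produces the four shifted-wall integrals with their multiplicities $+1, -1, +2, +2$. The main obstacle I expect is the geometric bookkeeping: one must verify that signs and multiplicities work out precisely so that all $\cal C$-integrals cancel and exactly these four shifted-wall integrals survive with the stated coefficients. This depends on the requirement that $\cal C$ passes below $\Omega$ in $\cU$, which determines which image copies of $\Omega$ lie above or below each shifted copy of $\cal C$; once the configuration is fixed as in Fig.~\ref{f:deform_contour}, the remaining algebra is routine.
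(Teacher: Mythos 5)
Your approach is essentially the same as the paper's: split the periodic traction into the near-image sum and the smooth correction $(\v,q)$, handle the correction by deforming $\cal C$ to $D$ (you do this explicitly via the divergence theorem on the strip between $\cal C$ and $D$ together with $\e_1$-periodicity of $\v$, which is a nice expansion of the paper's terser remark), re-index the near sum by shifting targets to get integrals over $\cal C$, ${\cal C}-\e_1$, ${\cal C}-\e_2$, ${\cal C}-\e_1-\e_2$ against the sources on $\tilde\Gamma$, then apply Gauss' Law on a closed loop enclosing $\tilde\Gamma_1\cup\tilde\Gamma_2$ and add.

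Two points of the bookkeeping you defer as ``routine'' are worth flagging because you state them slightly wrong. First, your description of the Gauss contour as containing ``arcs of the shifted walls $R-\e_2$, $L-\e_1-\e_2$, $D-\e_2$, $D-\e_1-\e_2$, together with pieces of the four shifted copies of $\cal C$'' does not match the paper's $\cal B$: the top and bottom of $\cal B$ are the shifted copies of $\cal C$ (namely ${\cal C}$, ${\cal C}-\e_1$ on top and ${\cal C}-\e_2$, ${\cal C}-\e_1-\e_2$ on the bottom), with no pieces of $D-\e_2$ or $D-\e_1-\e_2$ appearing; $\cal B$ cannot contain those bottom $D$-shifts because $\tilde\Gamma_1$ and $\tilde\Gamma_2$ intersect them. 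Second, it is not the case that ``all $\cal C$-integrals cancel'': when you add \eqref{eq:targshift} and \eqref{encl}, only $\int_{\cal C}$ and $\int_{{\cal C}-\e_1}$ cancel (opposite orientations), while $\int_{{\cal C}-\e_2}$ and $\int_{{\cal C}-\e_1-\e_2}$ add constructively, which is precisely where the factors of $2$ come from. A final step is then still required: those doubled integrals over ${\cal C}-\e_2$ and ${\cal C}-\e_1-\e_2$, and the top ones ${\cal C}$, ${\cal C}-\e_1$ before cancellation, must be deformed to the corresponding $D$-shifts; this is legitimate because each of them is distant (at least half a unit cell) from every source on $\tilde\Gamma$, so $S^t_{\tilde\Gamma}[\tilde\bsigma]$ is a smooth Stokes traction in a neighbourhood and Gauss' Law applies to the region swept out by the deformation. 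These are correctable details rather than a wrong approach, but they are exactly the steps you dismissed as routine, so the plan as written would not quite close.
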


\begin{proof}
Since the free space kernel $G^t(\x,\y)=G^t(\x-\y)$ is translation invariant, it is straightforward to
verify that 
\begin{equation}
S_{\Gamma}^{near,t}[\bsigma] (\x) = S_{\tilde{\Gamma}}^t[\tilde{\bsigma}](\x)+
S_{\tilde{\Gamma}}^t[\tilde{\bsigma}](\x-\e_1)+
S_{\tilde{\Gamma}}^t[\tilde{\bsigma}](\x-\e_2)+
S_{\tilde{\Gamma}}^t[\tilde{\bsigma}](\x-\e_1-\e_2)\,,
\end{equation}
simply by fixing the sources to be $\tilde{\Gamma}$ and shifting
the targets accordingly. Integrating
both sides on $\cal C$ leads to 
\begin{equation} \label{eq:targshift}
\int_{\cal C}S_{\Gamma}^{near,t}[\bsigma] ds_{\x} =\int_{D} S_{\tilde{\Gamma}}^t[\tilde{\bsigma}] ds_{\x}+
\int_{{\cal C}-\e_1} S_{\tilde{\Gamma}}^t[\tilde{\bsigma}] ds_{\x}+
\int_{{\cal C}-\e_2} S_{\tilde{\Gamma}}^t[\tilde{\bsigma}] ds_{\x}+
\int_{{\cal C}-\e_1-\e_2} S_{\tilde{\Gamma}}^t[\tilde{\bsigma}] ds_{\x}\,,
\end{equation}
However, \eqref{eq:SLGaussLaw1StokesForce} implies
\begin{equation}
\int_{\cal B} S_{\tilde{\Gamma}}^t[\tilde{\bsigma}] ds_{\x} = \int_{\tilde{\Gamma_1}\cup\tilde{\Gamma_2}}
\tilde{\bsigma} ds_{\x}\, = \int_{\Gamma_1\cup \Gamma_2} \bsigma ds_{\x}\,, 
\label{encl}
\end{equation}
where $\cal B$ is the closed loop given by the six curves
$L-\e_1-\e_2$, ${\cal C}-\e_1-\e_2$, ${\cal C}-\e_2$, $R-\e_2$,
${\cal C}$, and ${\cal C}-\e_1$,
oriented in the counterclockwise sense.
Note that $\cal B$ is the boundary of the two (deformed) unit cells 
$\cU-\e_1-\e_2 \;\cup\; \cU -\e_2$,
and encloses the sources $\tilde{\Gamma}_1$ and $\tilde{\Gamma}_2$, giving
\eqref{encl}.
See Fig. \ref{f:deform_contour}.
We now add \eqref{eq:targshift} and \eqref{encl}, and observe
cancellations of the wall terms ${\cal C}$ and ${\cal C}-\e_1$,
leaving \eqref{eq:contourdef} except with $\cal C$ in place of $D$ on the
right-hand side.
Finally, since they are around half a unit cell from any sources,
${\cal C}$ and ${\cal C}-\e_1$ may be deformed back to $D$ and $D-\e_1$
respectively without any effect.
$\cal C$ may similarly be replaced by $D$ in the
final correction term of \eqref{eq:contourdef} because $(\v,q)$ is a
Stokes pair throughout a neighborhood of $\cU$.
\end{proof}

\begin{rmk}
  The formula \eqref{eq:contourdef} involves integration in the far field of source curves only (all distances are at least half a unit cell), and the term involving $\v$ is also smooth.
  As a consequence, a smooth
quadrature rule with $O(1)$ nodes is enough to compute it accurately.
In addition, a little bookkeeping shows that the values of $S_{\tilde{\Gamma}}[\tilde{\bsigma}]$ on the four shifted walls
$R-\e_2$, $R-\e_1-\e_2$, $L-\e_1-\e_2$, and $D-\e_1-\e_2$ also appear when forming the discrepancy vector.
Consequently no extra cost is needed in evaluating $S_{\tilde{\Gamma}}[\tilde{\bsigma}]$ at the quadrature nodes.
They are readily available as a byproduct of the periodization by the MFS. 
\end{rmk}

\section{Time evolution and time stepping}
\label{s:step}

So far we have presented the solution of the quasi-static problem
for a given unit cell and particle geometry.
Recall that the quasi static solver takes as input the viscosity $\mu$,
the shear rate $\gamma$, the current lattice vectors $\e_1$ and $\e_2$, and the current boundaries of the rigid particles $\{\Gamma_j\}_{j=1}^{N_o}$, and returns the linear velocities of the particles $\{\v_j\}_{j=1}^{N_o}$, their angular velocities
$\{\omega_j\}_{j=1}^{N_o}$, and the velocity field $\u(\x)$ and pressure $p(\x)$ for $\x\in\cU\backslash \overline{\Omega_\Lambda}$.
However, the effects of the evolving particle positions and angles,
and the resulting changes in viscosity, are more of interest.

We assume that the background flow is shearing in the $y$-direction
at a constant rate $\gamma$, so have $\e_1(t)=(1,0)$ and $\e_2(t)=(\gamma t - \mbox{round}(\gamma t), 1)$,
where round$(x)$ is defined as the integer nearest to $x\in\RR$.
Here the rounding operation causes $\e_2(t)$ to jump backwards,
once per shear time, in a way that remains consistent with the periodicity
of the lattice of particles. This ensures a lattice whose skewness remains
bounded.
(More general prescribed time-dependent shear of a general unit cell is of course possible, as long as the skewness is kept small in a similar fashion.)
Let $\{\x_j^c(t)\}_{j=1}^{N_o}$ and $\{\theta_j(t)\}_{j=1}^{N_o}$ be the centers
and angles of the particles, whose
initial configuration was $\{\Gamma_j(0)\}_{j=1}^{N_o}$.
Since particle shapes remain unchanged in time, there exists a fixed function $\X_b$
that maps the centers and angles to the boundaries: $\Gamma_j=\X_b(\x_j^c,\theta_j)$. 
We further assume that the function $\X_b$ is given explicitly and at $t=0$ the initial values of the centers
and angles are given by $\x_{j,0}^c$ and $\theta_{j,0}^c$.

We stack the centers and angles into one vector $\s=(\x_1^c, \cdots, x_{N_o}^c,\theta_1,\cdots,\theta_{N_o})$, and
observe that $\v_j=\frac{d \x_j^c}{dt}$, and $\omega_j=\frac{d \theta_j}{dt}$. Using these notations, we can reinerpret the quasi-static solver as a function $F$ that maps $(t,\s)$ to the velocity $\frac{d\s}{dt}$, leading
to the following first-order ODE system
that governs the evolution:
\begin{equation} \label{eq:ode_mob}
\begin{cases}
\frac{d\s}{dt} & = F(t,\s) \\
\s(0) &= \s_0
\end{cases}
\end{equation}
In our case of constant shear rate, this ODE is in fact autonomous.

Many numerical methods for ODEs can be applied, the simplest one being forward Euler, which leads to $s^{(n+1)}=s^{(n)}+\Delta t F(t_n,s^{(n)})$, where $s^{(n)}:=s(t_n)$.
Higher order and/or adaptive methods can also
be applied, but with one caveat: when objects become
too close to each other (less than $\bigO(h^2)$), accuracy is lost in the
quasi-static solve, which we observe can cause stagnation
(here we tested the explicit adaptive RK4 ODE solver {\tt ode45} in MATLAB).
In practice it is common to 
add non-hydrodynamic short-range repulsion forces,
or non-overlapping constraints \cite{yan2019scalable},
in such simulations.
However, that is beyond the scope of this paper.
Instead we dedicate our attention to the quasi-static solver and adopt forward Euler for time stepping.

\section{Numerical examples}
\label{s:num}

{\bf Example 1.} As a first example, we carry out a convergence study for the quasi-static problem.
We define the boundaries to be two ellipses $\Gamma_1: \{(x_1+0.25\cos{\theta}, 0.125\sin{\theta}),\, \theta\in
[0,2\pi]\}$, and $\Gamma_2: \{(x_2+0.125\cos{\theta}, 0.25\sin{\theta}),\, \theta\in[0,2\pi]\}$. $x_1$ and $x_2$
are chosen so that $\Gamma_1$ and $\Gamma_2$ are distance $d$ apart, i.e. $|x_1-x_2|=d+0.375$. It is well known
that as $d$ shrinks, the traction becomes more and more sharply peaked, requiring more discretization points
on the boundary. In fact it is pointed out in \cite{sanganimo,wu2019arxiv} that the width of the ``bump" scales as $O(\sqrt{d/\kappa})$, where $\kappa=\kappa_1+\kappa_2$ is the sum of the curvatures of the boundaries at the close to touching point.

In this example, we have carried out experiments for $d=10^{-1},\,10^{-3},\,10^{-5}$ respectively, each for
different numbers of discretization points on the boundaries. Results are shown in Figure \ref{f:stat_conv} where we measure the
resolution by the residuals from recovering the rigid body motion as well as the errors in the linear and
angular velocities. In all the experiments, we set the tolerance of the FMM to be $10^{-12}$ and the tolerance
for the GMRES to be $10^{-10}$. The reference solution is computed with $10^4$ points on the boundary.

A spectral convergence is observed in each case. The onset of the convergence shows a $O(1/\sqrt{d})$ scaling with
respect to the distance, consistent with the asymptotic analysis. In the $d=10^{-1}$ case, it requires around $60$ points to achieve an error of $10^{-12}$, while in the $d=10^{-5}$ case, it requires around $6000$ points to get an error of $10^{-8}$, demonstrating the robustness of our method for very close to touching boundaries.

\bc
\bfi  
\centering\ig{width=3in}{geom_stat_conv.eps}
\ca{Geometry for Example 1: two ellipses with aspect ratio 2 that are a small
  distance $d$ apart.}{f:geom_stat_conv}
\efi

\ec

\bfi  
\ig{width=1.95in}{stat_conv1.eps}
\ig{width=1.95in}{stat_conv2.eps}
\ig{width=1.95in}{stat_conv3.eps}
\ca{Convergence study for the quasi static BVP, where the boundaries are two ellipses placed at a certain distance apart (Example 1). In all three plots, $N$ is the number of points on each ellipse. A spectral convergence in $N$ is observed in each case, while the onset $N$ of the convergence is seen to scale as $O(1/\sqrt{d})$.}{f:stat_conv}
\efi

{\bf Example 2.} In this example, we demonstrate the efficiency of our solver for the quasi-static BVP with complex geometry. We create a large number $K$ of random geometries where each boundary takes the form
$r(\theta)=s(1+a\cos{\omega\theta+\phi})$, with $\phi$ random, uniformly in $[0,0.5]$, $\omega$ randomly chosen from $\{2,3,4,5\}$, and $s$ varying over a size ratio of 4. They are then scaled and shifted to fill out the unit
square, where overlapping ones are discarded. The geometries with $K=100$ and $K=1000$ are shown in Figure
\ref{f:stat_islands}. 

The quasi-static BVP with viscosity $\mu=0.7$ and shear rate $\gamma=1$ is then solved for $K=100, 200, \cdots 1000$. In each case, we put down $N_k=350$ discretization points on each boundary,
and fix the FMM tolerance to be $10^{-9}$ and the tolerance of the residual to be $10^{-8}$.
As is pointed out in remark \ref{rmk:precomp},
the solver consists of a precomputation phase and a solution phase.
The CPU time consumed in each phase and the number of iteractions are given in Figure \ref{f:stat_islands_performance}. We make the observation that the time per iteration and the time of precomputation have a
clean linear growth with respect to the complexity of the boundary, and the number of iteractions has a
mild growth. The overall complexity is roughly linear.

\bfi  
\ig{width=3in}{islands100.eps}
\ig{width=3in}{islands1000.eps}
\ca{Pressure field of the quasi-static BVP with complex boundaries (Example 2). In the left plot, the boundary consists of 100 closed curves and in the right plot, there are 1000.}{f:stat_islands}
\efi

\bfi  
\ig{width=1.95in}{niter.eps}
\ig{width=1.95in}{tperiter.eps}
\ig{width=1.95in}{tpre.eps}
\ca{Scaling of the number of iterations, the CPU time consumed in each iteration, the total GMRES time, and the precomputation time with respect to the complexity of the geometry.}{f:stat_islands_performance}
\efi

{\bf Example 3.} In this example, we study the evolution of the effective viscosity over a long time, for which
we combine our solver for the quasi-static BVP and the Forward Euler time stepping method for the resulting ODE
system as in equation \ref{eq:ode_mob}. 

The initial configuration is chosen to be $25$ ellipses with aspect ratios in $[1,2]$, centered on a uniform
$5\times 5$ grid on the unit square, as is illustrated in Figure. The volume fraction is $0.32$. We fix viscosity $\mu=1$ and shear rate $\gamma=1$, and discretize each ellipse with $200$ points. Solutions of $\u(\x,t)$ and $p(\x,t)$ are computed on a $200\times 200$
grid on the unit cell, as well as the linear and angular velocities of the ellipses $\{(\v_i(t),\omega_i(t))\}_{i=1}^{25}$, and the effective viscosity $\mu_{\tbox{eff}}$ at $t\in[0,50]$.
Snapshots of the solution at different times are shown in Figure \ref{f:ellipses_snapshots}, and the effective viscosity is shown in Figure \ref{f:ellipses_mueff},
which suggests that the system has reached a stochastic steady state.

We validate the convergence in time by carrying out a self convergence study in $\mu_{\tbox{eff}}(t)$ over time
intervals $[0,1]$ and $[10,11]$. The results are given in table \ref{t:ellipses_conv}, which shows that first order
convergence is always achieved for $t\in[0,1]$, while for $t\in[10,11]$, a much smaller $\Delta t$ is required for
the expected order to be observed.
This growth in prefactor in the convergence is to be expected if
the ODE system increasingly amplifies deviations in initial conditions with time, for instance if there is a positive Lyapunov exponent.

\bc
\bfi  
\mbox{
\ig{width=2in}{ellipses_t0.eps}
\ig{width=2in}{ellipses_t10.eps}
\ig{width=2in}{ellipses_t15.eps}
}
\ca{Snapshots of the solution at different times for 25 ellipses (Example 3). The pressure field is shown in color, and the positions of the boundaries and extra tracer points (white dots) are plotted.
  The last snapshot is at $t=15.67$ where $\mu_\tbox{eff}(t)$ is very close to
  a peak (see Fig.~\ref{f:ellipses_mueff}); notice the presence of diagonal
  force chains (compressional high-pressure lines
  pointing at around 4 o'clock, extensional low-pressure lines at around
  1 o'clock).}{f:ellipses_snapshots}
\efi
\ec

\bfi  
\centering\ig{width=3in}{mueff.eps}
\ca{Time-evolution of the effective viscosity for the 25 ellipses example.}{f:ellipses_mueff}
\efi

\begin{table} 
\begin{tabular}{llcllcl} 
$\Delta t$ & $\mu(1)$ & $||\mu_{\Delta t}-\mu_{2\Delta t}||_{L^2([0,1])}$ & order &
$\mu(11)$ & $||\mu_{\Delta t}-\mu_{2\Delta t}||_{L^2([10,11])}$ & order \\
\hline
1.000e-2 & 1.788484 & N/A     & N/A  & 2.345831 & N/A     & N/A \\
5.000e-3 & 1.788799 & 1.43e-3 & N/A  & 2.336161 & 7.43e-2 & N/A \\
2.500e-3 & 1.788956 & 7.11e-4 & 1.00 & 2.331546 & 5.83e-2 & 0.35 \\
1.250e-3 & 1.789035 & 3.55e-4 & 1.00 & 2.329596 & 3.65e-2 & 0.68 \\
6.250e-4 & 1.789074 & 1.77e-4 & 1.00 & 2.328734 & 2.04e-2 & 0.84 \\
3.125e-4 & 1.789093 & 8.90e-5 & 0.99 & 2.328325 & 1.08e-2 & 0.92 \\
\hline
\end{tabular}
\ca{Self convergence study of $\mu_{\tbox{eff}}(t)$. Two regimes are tested: at the beginning when $t\in[0,1]$, and when
the system enters equilibrium around $t\in[10,11]$. Function values at $t=1$ and $t=11$ are given, as well as
the $L^2$ error over these time intervals, and the empirical order of convergence. This shows $\bigO(\Delta t)$ convergence even at later times.}{t:ellipses_conv}
\end{table}

{\bf Example 4.} We continue the study of the time evolution of the effective viscosity, with a small number of more complicated shapes.
In this last example, we define the boundary to be a star shape, which, as in example 2, takes the form
$\x(\theta)=(r(\theta)\cos{\theta}, r(\theta)\sin{\theta})$, where $\theta\in [0,2\pi]$ and $r(\theta)=0.25(1+0.5\cos{\theta})$. 
We then place two such stars in the unit square, one centered at $(0.25,0.25)$ and the other centered
at $(0.75,0.75)$; see Figure~\ref{f:stars}. As in example 3, we fix the background viscosity $\mu=1$ and shear rate $\gamma=1$, and use
forward Euler as the time stepping method, with $\Delta t=5.0e-4$. Each boundary is discretized using $2500$ points.

Snapshots of the solution and the effective viscosity are
shown in Figure \ref{f:stars}.
Approaching $t=0.46$, $\mu_\tbox{eff}(t)$
grows rapidly without limit, apparently heading towards an infinite value
within finite time. This is associated with the geometry becoming
``jammed'', ie, approaching a geometry
where the rigid objects {\em themselves} prevent further shearing.
One might question whether this blow-up is a numerically credible;
however,
we have tested convergence of the spatial solve and find that
errors are below $10^{-7}$ for $\mu_\tbox{eff} < 10^5$, ie, for all points in
the bottom left panel of Figure \ref{f:stars}.

\begin{rmk}[Lubrication theory]
  It is possible to use a thin-film Stokes model to predict an asymptotic
  blow-up of the form
  \be
  \mu_\tbox{eff}(t) \sim c |t-t^*|^{-\beta},
  \label{blowup}
  \ee
  where $t^*$ is the time where objects touch.
  As Figure \ref{f:stars} shows, the dominant cause of force
  is high pressure within a ``reservoir'' (red) whose volume is contracting
  approximately linearly in time. The resulting constant flow rate $Q$ must
  exit through two narrow channels whose width shrinks like
  $d \propto |t-t^*|$.
  Taking local coordinates where $x$ is location along the narrow channel,
  the Reynolds equation for channel flow \cite[Sec.~22.1]{pantonbook}
  is $\partial p/\partial x =-12Q\mu/h(x)^3$,
  where $h(x)$ is the local
  width.
  (See bottom right panel of Fig.~\ref{f:stars}.)
  For a smooth curves, $h(x) = d + cx^2 + O(x^4)$ holds for small $x$,
  so that $\partial p/\partial x$ is a ``bump'' function of height
  $O(d^{-3})$ and width $O(d^{1/2})$.
  Thus integrating $p$ on $(-\infty,\infty)$ gives a pressure drop
  $O(d^{-5/2})$, giving $\beta=5/2$ in \eqref{blowup}.
\label{r:lub}\end{rmk}

We take the values of $\mu_{\tbox{eff}}(t)$ for $t\in[0.4,0.44]$, where the spatial solve is accurate and $\mu_{\tbox{eff}}(t)$ is in an asymptotic regime, and apply nonlinear least squares to fit the parameters $c$, $t^*$ and $\beta$. The results are $c=2.187$, $t^{*}=0.458$ and $\beta=2.431$, with a relative residual of $0.0004$ and excellent agreement visible in Figure \ref{f:stars}.
The $\beta$ value is very close to the predicted $2.5$.

\begin{rmk}[Single-contact jamming]
  The $\mu_\tbox{eff}$ blow-up observed above relied on a reservoir
  trapped by two contact points. Another type of jamming is possible,
  with a single approaching contact point between two smooth surfaces.
  This situation is known as viscous adhesion \cite[Sec.~22.5]{pantonbook}.
  By contrast, now the flow depends on location, $Q(x) \propto x$,
  and a similar asymptotic analysis as above gives
  a pressure peak of $O(d^{-2})$, width $O(d^{1/2})$, thus
  the weaker power $\beta=3/2$. (See bottom right panel of Fig.~\ref{f:stars2}.)
\end{rmk}

{\bf Example 5.}
An initial configuration giving single-contact jamming is
as follows. The two star shapes are $\x(\theta)=(r(\theta)\cos{\theta}, r(\theta)\sin{\theta})$, where $\theta\in [0,2\pi]$ and $r(\theta)=0.275(1+0.5\cos{\theta})$. 
We then shift them so that the first one is centered at $(0.25,0.25)$ and the second at $(0.75,0.75)$. The second star is then rotated counter-clockwise by $\pi/4$. We keep other parameters unchanged and run the same simulation.

As Figure \ref{f:stars2} shows,
jamming is observed for $t\approx 0.47$. We take the values of $\mu_{\tbox{eff}}(t)$ for $t\in[0.4,0.45]$, and as before fit the parameters in \eqref{blowup},
getting $c=12.16$, $t^*=0.472$,
and $\beta=1.31$, with a relative residual of $0.006$.
The power law does not match the predicted $\beta=1.5$ as well as
in Example 4, but is still quite close.

\bfi  
\ig{width=1.95in}{stars_t0.eps}
\ig{width=1.95in}{stars_t2.eps}
\ig{width=1.95in}{stars_t5.eps}
\\
\mbox{
  \ig{width=1.8in}{jam_mueff1.eps}
  \;
  \ig{width=1.8in}{jam_fit1.eps}
  \;
  \ig{width=2.2in}{lub_reservoir.eps}
}
\ca{Jamming with reservoir trapping (Example 4). There are two smooth ``star'' shaped particles per unit cell.
    Top row: three snapshots of the evolution, with pressure field in color and tracer points as white dots (times $t=0$, $t=0.2$, $t=0.46$).
  Bottom left: finite-time blow-up of $\mu_\tbox{eff}(t)$.
  Bottom middle: the best fit (shown on log-log scale) of $\mu_\tbox{eff}(t)$ to the power-law form \eqref{blowup}, giving $\beta \approx 2.43$.
  Bottom right: lubrication theory pipe flow asymptotic model predicting the
  power $\beta=5/2$.}{f:stars}
\efi

\bfi  
\ig{width=1.95in}{stars2_t0.eps}
\ig{width=1.95in}{stars2_t2.eps}
\ig{width=1.95in}{stars2_t5.eps}
\\
\mbox{
  \ig{width=1.8in}{jam_mueff2.eps}
  \;
  \ig{width=1.8in}{jam_fit2.eps}
  \;
  \ig{width=2.2in}{lub_single.eps}
}
\ca{Jamming with single-point contact (Example 5). There are again two smooth ``star'' shapes per unit cell, with different parameters from Fig.~\ref{f:stars}.
    Top row: three snapshots of the evolution, with pressure field in color and tracer points as white dots (times $t=0$, $t=0.2$, $t=0.44$).
  Bottom left: finite-time blow-up of $\mu_\tbox{eff}(t)$.
  Bottom middle: the best fit (shown on log-log scale) of $\mu_\tbox{eff}(t)$ to the power-law form \eqref{blowup}, giving $\beta \approx 1.3$.
  Bottom right: lubrication theory viscous adhesion asymptotic model predicting the
  power $\beta=3/2$.}{f:stars2}
\efi

\section{Conclusion}
\label{s:conc}
We have developed an integral equation based fast solver for the highly accurate simulation of a doubly-periodic suspension of arbitrary smooth rigid particles in a shearing Stokes flow.
The effort per time-step scales linearly in the complexity of the geometry.
Our method combines a general framework for periodization which is robust and flexible, a Stokes FMM, and
a special quadrature rule for close to boundary targets.
The latter gives spectral accuracy in the spatial (quasi-static) solve.
With reasonable sampling of each particle's boundary, our method proves to be accurate even for particles separated by distances as close as $10^{-4}$ times the particle size.
A first order explicit Euler method is adopted for time stepping, although our formulation easily allows higher order time stepping.
The solver models pure hydrodynamic interactions without artificial
repulsion between particles;
we have found that, at least for ellipses at moderate volume fractions,
that accurate spatial solutions and small time steps mean that
no collisions occur even after tens of shear times.
In special cases where collisions occur in finite time, we demonstrate
that a lubrication theory model correctly predicts the numerical
asymptotic power-law force blow-up.

The solver presented could find application in studying
complex nonlinear rheology, the shape and volume fraction dependence of effective viscosity, and transport and turbulence effects.
It can trivially be generalized to time-varying shear rate to
study hysteretic or finite-frequency shear oscillation effects.
It is also easy to include gravitational forces to study sedimentation.

There are several challenges that we plan to address in future work.
For guaranteed accuracy in the spatial solve at closer distances,
adaptivity (eg using panel-based discretization of the
boundaries) would be needed \cite{ojalastokes,wu2019arxiv}.
Higher-order time-stepping is essential to exploit, but it remains an issue
to handle the apparent stiffness when two or more particles come close to each other.
At higher volume fractions it appears that one cannot avoid
including artificial repulsions or collision-avoidance \cite{lu2017contact,yan2019scalable}.
The generalization to three dimensions is relatively straightforward, given
high-order surface quadratures, or schemes for spheres \cite{yan2019scalable}, and will be reported at a later date.

\section*{Acknowledgments}

We are grateful for discussions with Manas Rachh, Charlie Epstein,
Mike Shelley, and Leslie Greengard.
The Flatiron Institute is a division of the Simons Foundation.

\appendix
\section{Proof of Theorem~\ref{t:EBVP}}
\label{a:EBVP}

We are given a smooth pair $(\w,r)$ which generates the discrepancy $\g$
via \eqref{g1}--\eqref{g4}.
We wish to prove existence of $(\v,q)$ solving the EBVP
\eqref{eq:emptyBoxEq1}--\eqref{eq:emptyBoxDiscrep4}.
Let $\mbf{h} := \mu \Delta\w - \nabla r \in (C^\infty(\ccU))^2$ and
$h:=-\nabla\cdot\w \in C^\infty(\ccU)$.
Now associate $\cU$ with the flat torus $\tor$ with the same lattice vectors,
and notice that $\mbf{h}$ and $h$, as functions on $\tor$, are
generally discontinuous but bounded, hence in $L^2(\tor)$.
Then let $(\u,p)$ be a periodic solution pair to
the inhomogeneous Stokes BVP on $\tor$:
\bea
-\mu \Delta \u + \nabla p &=& \mbf{h} \qquad \mbox{ in } \tor
\label{tor1}
\\
\nabla \cdot \u &=& h \qquad \mbox{ in } \tor
\label{tor2}
~.
\eea
Recall that the Stokes system is elliptic in the Douglis--Nirenberg sense
(see, e.g., \cite[Sec.~2.2.2]{volpert}).
The manifold $\tor$ has no boundary, so ellipticity implies that the
Stokes operator in this domain is Fredholm (e.g.\ \cite[Theorem~8.53]{wloka}).
It is also self-adjoint.
Thus if $(\mbf{h},h)$ is orthogonal to the nullspace, a solution
to \eqref{tor1}--\eqref{tor2} exists.
That the nullspace is precisely the span of the constant functions,
hence is 3-dimensional, is a simple result proven in
\cite[Prop.~4.2]{ahb}.
Thus we need only show that $\mbf{h}$ and $h$ defined above integrate to zero.
We compute, using the divergence theorem for Stokes \eqref{eq:DivThmStokes},
$$
\int_\cU \mbf{h} = \int_\cU (\mu \Delta\w - \nabla r) =
\int_{\pcU} \T(\w,r) = \int_L \g_2 + \int_D \g_4 = \mbf{0}
$$
by the consistency condition \eqref{gforce} on $\g$.
Similarly, using the usual divergence theorem,
$$
\int_\cU h = -\int_\cU \nabla\cdot\w = 
-\int_{\pcU} n\cdot\w = -\int_L n\cdot\g_1 - \int_D n\cdot\g_3 = 0
$$
by \eqref{gvol}.
Thus, $(\u,p)$ exists.
Finally, let $\v = \w + \u$, and $q = r + p$.
Since the volume right-hand side terms cancel,
it is easy to check that $(\v,q)$ satisfies the EBVP
\eqref{eq:emptyBoxEq1}--\eqref{eq:emptyBoxDiscrep4}.

By ellipticity of the Stokes system,
since the driving $(\mbf{h},h)$ is $C^\infty$ in the interior $\cU$,
the same is true for the solution $(\u,p)$,
and hence, by smoothness of $(\w,r)$, also for $(\v,q)$.
\hfill $\square$

\section{Proof of Theorem~\ref{t:consistency}}
\label{a:cons}

We first show that $\g$ is consistent in the sense of \eqref{gforce}--\eqref{gvol}.
\begin{align*}
\int_L \g_2 \,ds +\int_D \g_4 \,ds &= \int_L (-T^{near}_R+T^{near}_L+\frac{\f}{2|\e_2|}) \,ds 
+\int_D (-T^{near}_U+T^{near}_D+\frac{\f}{2|\e_1|})\,ds \\
&= \frac{\f}{2|\e_2|}\int_L ds + \frac{\f}{2|\e_1|}\int_D ds +\int_{\partial\cU} T^{near} ds \\
&= \f-\f = \mbf{0}.
\end{align*}
\begin{align*}
\int_L \g_1\cdot \n \,ds +\int_D \g_3\cdot \n \,ds &= \int_L (\w_L^{near}-\w_R^{near})\cdot \n \,ds 
+\int_D (\w_D^{near}-\w_U^{near})\cdot \n \,ds \\
&= \int_{\partial\cU} \w^{near}\cdot\, ds = \int_{\partial\cU}\w\cdot\n\, ds =0
\end{align*}

We must now show that there is a $(\w,q)$ which
generates the $\g$ given in \eqref{eq:discrepCorr1}--\eqref{eq:discrepCorr4}.
Since $\y$ is in the open set $\cU$, thus a nonzero distance from $\pcU$,
there exists a mollifier $\psi\in C^\infty(\overline{\cU})$
which is identically 1 in a neighborhood of $\pcU$ but identically zero
in a neighborhood of $\y$.
Then the smooth functions $\w = \psi \w^{near}$ and $q=\psi q^{near}$
generate (in the sense of Theorem~\ref{t:EBVP}) the bracketed
terms in \eqref{eq:discrepCorr1}--\eqref{eq:discrepCorr4}.
Smooth functions may then be added to $\w$ to generate the additional
$\f$-dependent constant terms in $\g_2$ and $\g_4$.
To see this, for example, when the unit cell has height 1,
one may check using $\n_1=(0,1)$ that the smooth vector function
$\w(x_1,x_2) = (f_1/2,f_2/4)x_2^2/\mu|\e_1|$, and $q\equiv 0$,
generates $\g_4 =\f/2|\e_1|$ as needed in \eqref{eq:discrepCorr4}.
The term needed in $\g_2$ may be similarly generated by rotating a similar
function.
Thus there is a $(\w,q)$ in $C^\infty(\overline{\cU})$ generating $\g$.
Finally, we apply Theorem~\ref{t:EBVP}.
\hfill $\square$

\bibliographystyle{abbrv} 
\bibliography{ref.bib}
\end{document}